\newtheorem{theorem}{Theorem}[section]
\newtheorem{lem}[theorem]{Lemma}     
\newtheorem{cor}[theorem]{Corollary}
\newtheorem{prop}[theorem]{Proposition}
\theoremstyle{definition}
\newtheorem{definition}[theorem]{Definition}
\theoremstyle{remark}
\numberwithin{equation}{section} \theoremstyle{plain}
\def\C{\mathbb C}
\def\Z{\mathbb Z}
\newcommand{\secref}[1]{Section~\ref{#1}}
\newcommand{\thmref}[1]{Theorem~\ref{#1}}
\newcommand{\lemref}[1]{Lemma~\ref{#1}}
\numberwithin{equation}{section}
\begin{document}

\title[  Palindromic width of graph of groups]{Palindromic width of 
 graph of groups}

\author[Krishnendu  Gongopadhyay]{Krishnendu Gongopadhyay}

\author[Swathi Krishna]{Swathi Krishna} 
\address{Indian Institute of Science Education and Research (IISER) Mohali, Knowledge City, 
Sector 81,  S.A.S. Nagar, Punjab 140306, India}
\email{krishnendug@gmail.com, krishnendu@iisermohali.ac.in}

\address{Indian Institute of Science Education and Research (IISER) Mohali, Knowledge City, 
Sector 81,  S.A.S. Nagar, Punjab 140306, India}
\email{swathi280491@gmail.com}
\subjclass[2010]{   (Primary) 20F65;   (Secondary) 20E06}
\keywords{ palindromic width, graph of groups, HNN extension, amalgamated free product}
\thanks{Gongopadhyay acknowledges partial support from the grant DST/INT/RUS/RSF/P-19.} 
\date{\today}

\begin{abstract} 
In this paper we answer questions raised by  Bardakov and Gongopadhyay in \emph{Comm Algebra, vol 43, issue 11 (2015), 4809--4824}.  We prove that the palindromic width of HNN extension of a group by proper associated  subgroups is infinite. We also prove that the palindromic width of the amalgamated free product of two groups via a proper subgroup is infinite (except when the amalgamated subgroup has index two in each of the factors). Combining these results it follows that  the palindromic width of the fundamental group of a graph of groups is mostly  infinite. 
\end{abstract}
\maketitle
\section{Introduction} 
Words are basic objects in group theory and they are natural sources to view groups as geometric objects. Using words, one can naturally associate a length to each group element, and the maximum of all such lengths gives the notion of a width. The theory of verbal subgroup, that is subgroup determined by a word (for example, the commutator subgroup), and the verbal width have seen many decisive results in recent time, e.g. see \cite{seg}.  
It is natural to ask for widths given by curious classes of `non-verbal' words. In this paper, we consider the width that comes from one such class, viz. the palindromic words. 

\medskip Let $G$ be a group and let $S$ be a generating set with $S^{-1}=S$. A {\textit{word-palindrome}} or simply, \emph{palindrome} in $G$ is a reduced word in $S$ which reads the same forward and backward. 
Palindromic words arise naturally in the investigation of combinatorics of words and have been studied widely from several point of views,  see \cite{pal3} for a survey.  Palindromes in groups have  also appeared in the context of geometry of automorphisms of free groups, for example, see \cite{gj}, \cite{ful}, \cite{bgs}. Gilman and Keen \cite{gk} applied the geometry of palindromes in a two-generator free group to obtain discreteness conditions for two-generator subgroups in ${\rm SL}(2, \C)$. 

For an element $g \in G$, the {\textit{palindromic length}}, $l_{\mathcal{P}}(g)$ is the minimum number $k$ such that $g$ can be expressed as a product of $k$ palindromes.
Then the {\textit{palindromic width}} of $G$ with respect to $S$ is defined as:
$$pw(G,S) = \sup_{g \in G} l_{\mathcal{P}}(g).$$

Bardakov, Shpilrain and Tolstykh \cite{bst} initiated the investigation of palindromic width and proved that the palindromic width of a non-abelian free group is infinite.   Recently, there have been a series of work that aims to understand the palindromic widths in several other classes of groups including relatively free groups.    Bardakov and Gongopadhyay have proved finiteness of palindromic widths of finitely generated free nilpotent groups and certain solvable groups, see \cite{bg1, bg2, BG-solv}. In \cite{bbg},  finiteness of palindromic width of nilpotent products has been proved.  Palindromic widths of wreath products and Grigorchuk groups have been investigated by Fink \cite{fink1, f2}. Riley and Sale have investigated palindromic widths in certain wreath products and solvable groups \cite{rs} using finitely supported functions from $\Z^r$ to the given group. Fink and Thom \cite{ft} have studied palindromic widths in simple groups and yielded the first examples of groups having finite palindromic widths but infinite commutator widths.  

\medskip The work \cite{bt} has been generalized to free product of groups by Bardakov and Tolstykh in \cite{bt}. It has been proved that a free product of two groups, except $\Z_2 \ast \Z_2$,  has infinite palindromic width. In this paper our aim is to investigate the palindromic widths of some other free constructions of groups.  We investigate the palindromic width for HNN extensions and amalgamated free products of groups.  For HNN extensions we have the following. 
\begin{theorem}\label{hnn}  Let $G$ be a group and let $A$ and $B$ be proper isomorphic subgroups of $G$ and $\phi: A \to B$ be an isomorphism. The HNN extension 
$$ G_{\ast} = \langle G,t \ | \  t^{-1}at=  \phi (a),\  a \in A \rangle$$
of G with associated subgroups $A$ and $B$ has infinite palindromic width with respect to the generating set $G \cup \{t, t^{-1}\}$.
\end{theorem}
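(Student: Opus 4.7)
The plan is to combine Britton's normal form for HNN extensions with the symmetry that the palindromic condition imposes on that normal form, and then construct a family of elements whose decomposition into palindromes must use arbitrarily many factors.

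First, I would recall Britton's lemma: every $g \in G_*$ admits a reduced expression $g = g_0 t^{\epsilon_1} g_1 t^{\epsilon_2} \cdots t^{\epsilon_n} g_n$ with $g_i \in G$, $\epsilon_i \in \{\pm 1\}$, and no pinch $t^{-1}at$ ($a \in A$) or $tbt^{-1}$ ($b \in B$); the $t$-length $n$ is an invariant of $g$. Next, for a palindrome $p = s_1 \cdots s_m$ in the alphabet $S = G \cup \{t, t^{-1}\}$, collapsing maximal $G$-blocks yields an alternating expression $h_0 t^{\epsilon_1} h_1 \cdots t^{\epsilon_l} h_l$ in which the identity $s_i = s_{m-i+1}$ forces both $\epsilon_i = \epsilon_{l-i+1}$ and the requirement that $h_i$ and $h_{l-i}$ are products of the same $G$-letters in opposite orders. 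The pinch reductions needed to pass from here to Britton normal form can be arranged symmetrically from the centre outward, so both symmetries survive in the final normal form.

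Second, I would exhibit witnesses. Since a group is never the union of two proper subgroups, we may pick $c \in G \setminus (A \cup B)$ and set $w_n := (tct^{-1}c)^n$. The choices $c \notin A$ and $c \notin B$ block the pinches $t^{-1} c t$ and $t c t^{-1}$ respectively, so $w_n$ is in Britton normal form with $t$-length $2n$ and $\epsilon$-sequence $(+,-,+,-,\ldots,+,-)$, which fails the palindromic condition at every index. I would then introduce a complexity function $\mu : G_* \to \mathbb{N}$ detecting asymmetry in the $\epsilon$-sequence together with mirror-incompatibility of the $G$-syllables. By the structural analysis above, $\mu(p) = 0$ for every palindrome $p$, while $\mu(w_n)$ grows linearly with $n$.

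Finally, one must prove a near sub-additivity bound $\mu(g_1 g_2) \leq \mu(g_1) + \mu(g_2) + C$ for an absolute constant $C$ depending only on the data $(G, A, B, \phi)$. Granted this, a factorisation $w_n = p_1 \cdots p_k$ into palindromes forces $\mu(w_n) \leq kC$, so $k \to \infty$ with $n$ and the palindromic width is infinite. The main obstacle is precisely this sub-additivity: the pinch cascade at the junction of two normal forms can in principle shift or destroy asymmetries across a contiguous interval of syllables, and one has to identify the right $\mu$, along with the right accounting of mirror pairings, so that the damage at each junction is bounded independently of $\|g_1\|_t$ and $\|g_2\|_t$.
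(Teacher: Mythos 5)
Your outline follows the same broad strategy as the paper (Britton normal form, a structural lemma for palindromes, a sub-additive invariant vanishing on palindromes, and a witness family), but as written it has two genuine gaps, one of which is fatal to the specific construction you propose. The decisive problem is the witness family $w_n=(tct^{-1}c)^n$: these elements have palindromic length at most $2$ for every $n$. Indeed, $t^{-1}w_n = ct^{-1}c\,(tct^{-1}c)^{n-1}$ is literally a word-palindrome in the alphabet $G\cup\{t,t^{-1}\}$ --- its odd positions are all the single letter $c$ and its sequence of $t$-exponents is $(-,+,-,\dots,-)$, an odd-length alternating (hence symmetric) sequence --- so $w_n = t\cdot(t^{-1}w_n)$ is a product of two palindromes. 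The fact that $\mathrm{sqn}(w_n)=(+,-,+,\dots,+,-)$ ``fails the palindromic condition at every index'' only shows that $w_n$ is not itself a palindrome; it says nothing about its palindromic length. Consequently no sub-additive $\mu$ that vanishes (or is uniformly bounded) on palindromes can grow along this sequence, so the third step of your plan cannot be completed for these witnesses. The second gap is that $\mu$ is never actually constructed: you correctly identify the sub-additivity at the pinch cascade as ``the main obstacle,'' but that obstacle is the entire content of the argument, so the proposal is a plan rather than a proof.

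For comparison, the paper's invariant is $\Delta(g)=\sum_k r_k(g)$, where $d_k(g)$ counts the number of maximal constant runs of $+1$'s of length $k$ in the signature minus the number of such runs of $-1$'s, and $r_k$ is $d_k$ reduced mod $2$. The normal form of a group-palindrome is $u v \bar{u}$ with $v$ homogeneous, which doubles every run count except one, so $\Delta\le 1$ on palindromes; sub-additivity with constant $6$ is Bardakov's lemma. Crucially, the witnesses $a_n$ are chosen with signatures built from alternating-sign runs of lengths $1,2,\dots,n$, arranged so that $d_k(a_n)$ is odd for each $k\le n$, giving $\Delta(a_n)=n$. Note that your $w_n$ has $d_1=n-n=0$ and $d_k=0$ otherwise, so even the paper's $\Delta$ confirms that your witnesses carry no obstruction. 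To repair your argument you would need to replace $(tct^{-1}c)^n$ by elements whose run-length statistics are genuinely asymmetric in this mod-$2$ sense, at which point you are reconstructing the paper's proof.
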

For amalgamated free product of groups we prove the following theorem  that extends the work of Bardakov and Tolstykh cited above. 
 \begin{theorem}\label{free}
Let $G=A\ast_{C} B$ be the free product of two groups $A$ and $B$ with amalgamated proper subgroup $C$ and $|A:C| \geq 3$, $|B:C| \geq 2$. Then $pw(G, A \cup B)$ is infinite.
\end{theorem}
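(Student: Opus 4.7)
My plan is to construct, following the strategy of Bardakov--Tolstykh \cite{bt}, a function $\lambda : G \to \Z$ that is (nearly) additive under products, bounded on palindromes, and unbounded on $G$; these three properties together force $pw(G, \{A, B\}) = \infty$.

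The starting point is the normal form theorem for $A *_C B$. Fixing systems $R_A \subset A$ and $R_B \subset B$ of right coset representatives (with $1$ representing the trivial coset), every $g \in G$ admits a unique expression $g = c \cdot t_1 t_2 \cdots t_n$ with $c \in C$, $t_i \in (R_A \cup R_B) \setminus \{1\}$, and consecutive $t_i$ alternating between $A$ and $B$. Record the coset sequence $\sigma(g) = (t_1 C, \ldots, t_n C)$. The key structural lemma I would prove asserts that if $p$ is a palindromic word in the alphabet $A \cup B$, then $\sigma(p)$ is itself palindromic: $t_i C = t_{n+1-i} C$ for all $i$. This is established by induction on word length, using the observation that the rewrites $t c \rightsquigarrow c' t'$ used to reduce to normal form push $C$-factors leftward without changing the cosets $tC = t'C$.

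Now use $|A:C| \geq 3$ to pick distinct non-trivial cosets $a_1 C \neq a_2 C$ in $A/C$, and use $|B:C| \geq 2$ to pick a non-trivial $bC$ in $B/C$. Define $\lambda(g)$ as the number of occurrences in $\sigma(g)$ of the ordered triple $(a_1 C, bC, a_2 C)$ minus the number of occurrences of the mirrored triple $(a_2 C, bC, a_1 C)$. By the palindromic-coset lemma, every occurrence of $(a_1 C, bC, a_2 C)$ at position $i$ in a palindrome is mirrored by an occurrence of $(a_2 C, bC, a_1 C)$ at position $n-1-i$, so $\lambda(p) = 0$ on palindromes. On the other hand, multiplying two normal forms affects only a bounded window of syllables at the junction, yielding the near-additivity $|\lambda(gh) - \lambda(g) - \lambda(h)| \leq D$ for an absolute constant $D$.

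The remaining and main obstacle is to exhibit a sequence $w_n$ with $|\lambda(w_n)| \to \infty$. When $|B:C| \geq 3$ one can pick $b, b'$ in distinct non-trivial $B$-cosets and set $w_n = (a_1 b a_2 b')^n$: the coset sequence contains $n$ triples $(a_1 C, bC, a_2 C)$ and no mirrored triples, so $\lambda(w_n) = n$. The delicate regime is $|B:C| = 2$, where there is only one non-trivial $B$-coset and naive periodic constructions like $(a_1 b a_2 b)^n$ produce $\lambda = O(1)$ because the mirrored triples also appear $n-1$ times. Here one must either introduce an aperiodic blocked construction (mixing runs that start with $a_1$ and runs that start with $a_2$, arranged so that $(a_1, b, a_2)$ outpaces its mirror by $\Omega(n)$) or redesign $\lambda$ using only $A$-side data, exploiting the three available $A$-cosets to generate directional bias independent of $B$. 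Once such $w_n$ is produced, near-additivity and vanishing on palindromes yield $l_{\mathcal{P}}(w_n) \geq |\lambda(w_n)|/D \to \infty$, proving the theorem. The excluded case $[A:C] = [B:C] = 2$ precisely corresponds to $A *_C B$ being virtually infinite cyclic of dihedral type, where no such asymmetry invariant can exist; this structural fact is what forces the hypothesis $|A:C| \geq 3$.
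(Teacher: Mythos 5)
Your high-level strategy (a near-additive invariant, bounded on palindromes, unbounded on $G$) is the same quasi-homomorphism strategy the paper uses, but two of your three ingredients fail as stated. First, the near-additivity of $\lambda$ is not justified: in $A\ast_C B$ the product of two normal forms is not obtained by touching ``only a bounded window at the junction'' --- an arbitrarily long suffix of $g$ can cancel against a prefix of $h$. For a signed pattern count to survive such cancellation (as in the Brooks-type quasimorphisms of Bardakov and Dobrynina \cite{vb, dob1}), each counted pattern must be paired with its \emph{reverse-inverse}, because the cancelled suffix of $g$ is the reverse-inverse of the cancelled prefix of $h$. You pair $(a_1C,bC,a_2C)$ with its \emph{reverse} $(a_2C,bC,a_1C)$ instead; these two pairings coincide only when the relevant double cosets are inversion-invariant, so in general your $\lambda$ is not a quasi-homomorphism. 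Second, the ``palindromic-coset lemma'' is false for one-sided cosets: renormalizing the reversed word pushes $C$-factors through the syllables and replaces each $t_i$ by an element of $Ct_ic'$ for an accumulating $c'\in C$ (already for $p=s_1s_2s_1$ the normal-form coset sequence is $(Cs_1c_2,\,Cs_2c_1,\,Cs_1)$). Only the double-coset sequence $(Ct_1C,\dots,Ct_nC)$ reverses under $g\mapsto\bar g$, so $\lambda$ must be built from double cosets. But then $|A:C|\ge 3$ no longer supplies two distinct non-trivial patterns: e.g.\ for $A=S_3$ and $C$ generated by a transposition, $C\backslash A/C$ has only two elements, so every choice of $a_1,a_2$ gives $\lambda\equiv 0$. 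This degeneracy is precisely why the paper splits into the cases $CaC\ne Ca^{-1}C$ and $CaC=Ca^{-1}C$ and uses Dobrynina's segment-length/parity invariant, which is bounded by $3$ (not $0$) on group-palindromes and is insensitive to these collapses.

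Third, the step you yourself flag as the main obstacle --- producing $w_n$ with $|\lambda(w_n)|\to\infty$ when $|B:C|=2$ --- is left unexecuted. The repair you gesture at does exist in principle (with three $A$-cosets one can cycle $a_1\to a_2\to a_3\to a_1$, e.g.\ $w_n=(a_1b\,a_2b\,a_3b)^n$, so that $(a_1,b,a_2)$ occurs $n$ times and its mirror never), but it presupposes distinct non-trivial double cosets and a repaired quasimorphism, so as written the proposal proves the theorem only under substantially stronger hypotheses than those stated. For comparison, the paper's witness sequence $g_n=baba^{-1}ba(ba^{-1})^2\cdots ba(ba^{-1})^nba$ uses a single element $a$ together with $a^{-1}$ and needs only $|B:C|\ge 2$. (A minor aside: in the excluded case $[A:C]=[B:C]=2$ the group is an extension of the infinite dihedral group by $C$, which can be arbitrary, so it is not ``virtually infinite cyclic'' in general; the paper shows $pw(G,\{A,B\})\le 3$ there.)
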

 
The above two  theorems answer Question 3 and Question 4 in \cite{BG-solv}, and  also Problem 6  and Problem 7 in \cite{vw}. Since non-solvable Baumslag-Solitar groups are special cases of the HNN extensions, this also answers Question 2 in \cite{BG-solv}.

\medskip As an application of the above two theorems, we determine  the palindromic width for  the fundamental group of a graph of groups. We recall that  a \emph{graph of groups} $(G, Y)$ consists of a non-empty, connected graph $Y$, a group $G_P$ for each $P \in$ vert $Y$ and a group $G_e$ for each $e \in$ edge $Y$, together with monomorphisms $G_e \rightarrow G_{\alpha(e)}$ and $G_e \rightarrow G_{\omega(e)}$, where for each edge $e$, $\alpha(e)$ is the initial vertex and $\omega(e)$ is the final vertex, e.g. \cite{ob}, \cite{serre}.  We assume that $G_e= G_{\bar{e}}$. For each $P \in$ vert $Y$, let $S_P$ be the generating set of $G_P$. Also, let $T$ be a maximal tree in $Y$.  We fix
\[S= \{\cup_{P \in \text{vert}\hspace{1mm} Y} S_P\} \cup \{edge(Y)-edge(T)\}\] to be the \emph{standard generating set} of the fundamental group of $(G, Y)$, $\pi_1(G, Y)$. It is straight forward to see that  the fundamental group of any graph of groups has a representation which is an amalgamated free product or a HNN extension. Hence we have the following consequence of \thmref{hnn} and \thmref{free}. 
\begin{cor}\label{gog} 
Let $Y$ be a non-empty, connected graph. Let $\pi_1(G, Y)$ be the fundamental group of the graph of groups of $Y$ with the standard generating set $S$. Then the palindromic width of $\pi_1(G, Y)$ is infinite if
\begin{enumerate}
\item $Y$ is a loop with a vertex $P$ and edge $e$; and the image of $G_e$ is a proper subgroup of $G_P$; or
\item $Y$ is a tree and has an oriented edge $e= [P_1, P_2]$  such that removing $e$, while retaining $P_1$ and $P_2$, gives two disjoint graphs $Y_1$ and $Y_2$ with $P_i \in$ vert $Y_i$ satisfying the following: extending $G_e \rightarrow G_{P_i}$ to $\phi_i : G_e \rightarrow \pi_1(G, Y_i)$, $i= 1,2$, we get $[\pi_1(G, Y_1) : \phi_1(G_e)] \geq 3$ and $[\pi_1(G, Y_2) : \phi_2(G_e)] \geq 2$.
\item $Y$ has an oriented edge $e = [P_1,P_2]$ such that removing the edge, while retaining $P_1$ and $P_2$ does not separate $Y$ and gives a new graph $Y'$ satisfying the following:
 extending $G_e \rightarrow G_{P_i}$ to $\phi_i : G_e \rightarrow \pi_1(G, Y')$, $i= 1,2$, we have $\phi_{i}(G_e) = H_{i}$ and $H_1$, $H_2$ are proper subgroups of $\pi_1(G, Y')$.
\end{enumerate} 
\end{cor}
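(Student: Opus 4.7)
The plan is to recognize that in each of the three cases, the fundamental group $\pi_1(G, Y)$ admits a standard Bass--Serre decomposition as either an HNN extension or an amalgamated free product whose associated/amalgamated subgroups satisfy the hypotheses of \thmref{hnn} or \thmref{free}. The first observation I will use throughout is that palindromic width is non-increasing under enlargement of the generating set, so infinite palindromic width with respect to the (large) generating sets appearing in the two theorems automatically transfers to the (smaller) standard generating set $S$ of the corollary, which in each case is contained in them.

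For case (1), $Y$ is a loop with one vertex $P$ and one geometric edge $e$, so the maximal tree $T$ is $\{P\}$ and $e$ is a non-tree edge. The standard presentation of $\pi_1(G, Y)$ is then literally the HNN extension of $G_P$ with stable letter $e$ and associated subgroups equal to the images of $G_e$ in $G_P$ under the two boundary monomorphisms. These images are proper subgroups by hypothesis, so \thmref{hnn} applies to the generating set $G_P \cup \{e, e^{-1}\}$, which contains $S = S_P \cup \{e\}$.

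For case (2), since $Y$ is a tree we may take $T = Y$. Deleting the edge $e = [P_1, P_2]$ produces two subtrees $Y_1, Y_2$, each retaining one endpoint of $e$, and a direct reading of the presentation of $\pi_1(G, Y)$ yields
\[
\pi_1(G, Y) \;\cong\; \pi_1(G, Y_1) \ast_{G_e} \pi_1(G, Y_2),
\]
where the amalgamation is along the extensions $\phi_i : G_e \to \pi_1(G, Y_i)$. The index hypotheses $[\pi_1(G, Y_1) : \phi_1(G_e)] \geq 3$ and $[\pi_1(G, Y_2) : \phi_2(G_e)] \geq 2$ are exactly those required by \thmref{free}, which gives infinite palindromic width with respect to $\pi_1(G, Y_1) \cup \pi_1(G, Y_2)$. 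Since each $S_P \subseteq G_P \subseteq \pi_1(G, Y_i)$ for $P \in \mathrm{vert}\, Y_i$ and there are no non-tree edges, the standard generating set $S$ is contained in this union, and we are done.

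For case (3), the hypothesis that removing $e$ leaves a connected graph $Y'$ lets us pick a maximal tree $T$ of $Y$ contained in $Y'$. Then $e$ is a non-tree edge of $Y$, and the standard presentation exhibits $\pi_1(G, Y)$ as the HNN extension of $\pi_1(G, Y')$ with stable letter $e$ and associated subgroups $H_1 = \phi_1(G_e)$ and $H_2 = \phi_2(G_e)$, both proper by hypothesis; \thmref{hnn} therefore applies. The only real technical point across the three cases is the precise identification of the decomposition in (2) and (3): one has to verify from the normal form for $\pi_1(G, Y)$ that the amalgamating/associated subgroups that appear are genuinely the images of $G_e$ under $\phi_1, \phi_2$, and that the factor groups are themselves the fundamental groups of the sub-graphs of groups. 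This is routine Bass--Serre theory, and I expect it to be the main (but mild) obstacle; once it is in place, the two theorems close the argument immediately.
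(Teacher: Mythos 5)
Your proposal is correct and follows essentially the same route as the paper: in each case one identifies $\pi_1(G,Y)$ as an HNN extension (cases (1) and (3)) or an amalgamated free product (case (2)) satisfying the hypotheses of \thmref{hnn} or \thmref{free}, respectively. The only addition beyond the paper's own (very brief) argument is your explicit remark on monotonicity of palindromic width under enlargement of the generating set, which is a correct and harmless way to pass from the theorems' generating sets to the standard generating set $S$.
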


 The fundamental group in $(1)$ is an HNN extension of $G_P$ and so, $(1)$ follows from \thmref{hnn}. In $(2)$, the fundamental group is an amalgamated free product of $\pi_1(G, Y_1)$ and $\pi_1(G, Y_2)$ with proper amalgamated subgroups $\phi_1(G_e) \cong \phi_2(G_e)$. The result follows from \thmref{free}. Finally, the fundamental group in $(3)$ is an HNN extension of $G'$, with $G'$ being the fundamental group of the graph of groups corresponding to $Y'$. Hence, this also follows from \thmref{hnn}. 
 
 \subsubsection*{Idea of the Proof} Let $G$ be the group under consideration. An element $g$ in $G$ is a \emph{group-palindrome}  if $g$ can be represented by a word $w$ such that its reverse $\bar{w}$ also represents $g$. This notion is  weaker than the notion of `word-palindromes', see \cite{bbg} for a comparison of these two notions.  The set $\mathcal P$ of word-palindromes is obviously a subset  of $\mathcal{ GP}$, the set of group-palindromic words.  Thus, for an element $g$ in $G$, $ l_{\mathcal{GP}}(g) \leq l_{\mathcal P}(g)$. 
Consequently, the palindromic width with respect to group-palindromes does not exceed $pw(G, S)$. We shall show that the palindromic width with respect to group-palindromes is infinite and that will establish the main results. To achieve this, we shall use quasi-morphism techniques. 
\begin{definition}
Let $H$ be a group. A map $\Delta: H \to \mathbb{R} $ is called a {\textit{quasi-homomorphism}} if there exists a constant $c$ such that for every $x, y \in H$, $\Delta(xy) \leq \Delta(x) + \Delta(y) +c$.
\end{definition} 
Quasi-morphisms have wide applications in mathematics and for a brief survey on these objects see \cite{kot}. However, our motivation for using them in this paper comes from the work of Bardakov \cite{vb} and Dobrynina \cite{dob1, dob2} where the authors have proved infiniteness of verbal subgroups of HNN-extensions and amalgamated free products, also see \cite{bst}, \cite{bt}. We shall follow their methods here.  

We prove \thmref{hnn} in \secref{hnns} and  \thmref{free} in \secref{afp}.  For both the proofs,  a canonical form for a  palindromic word will be obtained. Then it will be shown that the underlying quasi-homomorphism $\Delta$  will be bounded if $G$ has finite  palindromic width.  Finally,   a sequence of elements in the underlying group will be noted where $\Delta$ will be bounded away, thus establishing the infiniteness of the palindromic widths.  and proving the theorems.

\bigskip \noindent{\bf Notation.} 
Let $f$ and $g$ be functions over non-zero integers. We write $f =_m g$ to denote that $f(k)=g(k)$ for all values of $k$ except at most $m$ values. Then clearly, $f =_m g$ and $g =_n h$ implies $f =_{m+n} h$. Also $f =_m g$ and $f' =_n g'$ implies $f+f' =_{m+n} g+g'$.

\section{Palindromic Width for  HNN Extensions of Groups}\label{hnns}
\subsection{HNN Extensions} 
Let $G$ be a group and $A$ and $B$ be proper isomorphic subgroups of $G$ with the isomorphism $\phi: A\to B$. Then the HNN extension of $G$ is 
$$G_{\ast} = \langle G,t~|~t^{-1}at=\phi(a), a\in A\rangle .$$ 

\medskip A sequence $g_{0},t^{\epsilon_{1}},g_{1},t^{\epsilon_{2}}, \ldots ,g_{n-1},t^{\epsilon_{n}},g_{n}$, $n\geq 0$, is said to be{\textit{ reduced}} if it does not contain subsequences of the form $t^{-1},g_{i},t$ with $g_{i} \in$ A or $t,g_{i},t^{-1}$ with $g_{i} \in$ B. 
By Britton's Lemma, if a sequence $g_{0},t^{\epsilon_{1}},g_{1},t^{\epsilon_{2}}, \ldots ,g_{n-1},t^{\epsilon_{n}},g_{n}$ is reduced and $n \geq  1$, then $g = g_{0}t^{\epsilon_{1}}g_{1}t^{\epsilon_{2}} \ldots g_{n-1}t^{\epsilon_{n}}g_{n}$ is not trivial in $G_{\ast}$ and we call it a {\textit{reduced word}}.

\vspace{3mm}

Such a representation of a group element of an HNN extension is not unique but the following lemma holds:

\begin{lem}\label{lem1}\cite[Lemma 3]{vb}
Let $g= g_{0}t^{\epsilon_{1}}g_{1}t^{\epsilon_{2}} \ldots g_{n-1}t^{\epsilon_{n}}g_{n}$ and \\ $h= h_{0}t^{\theta_{1}}h_{1}t^{\theta_{2}} \ldots h_{m-1}t^{\theta_{m}}h_{m}$ be reduced words, and suppose $g= h$ in $G_{\ast}$. Then $m= n$ and $\epsilon_i= \theta_i$ for $i= 1, \ldots ,n$. 
\end{lem}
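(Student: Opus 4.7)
The plan is induction on $n$, using Britton's Lemma to force any reduction in the concatenated word $g h^{-1}$ to happen at the junction of the two given reduced expressions. For the base case $n=0$, the element $g = g_0$ lies in $G$, and the sequence $h_m^{-1} t^{-\theta_m} \cdots t^{-\theta_1}(h_0^{-1} g_0)$ representing $h^{-1}g$ differs from the reduced sequence for $h^{-1}$ only in its rightmost $G$-entry. It is therefore still a reduced sequence of $t$-length $m$, and by Britton's Lemma it can equal $1$ only when $m=0$.

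For the inductive step ($n \geq 1$), concatenate to form
\[
g_0 t^{\epsilon_1} g_1 \cdots t^{\epsilon_n}\,(g_n h_m^{-1})\,t^{-\theta_m} h_{m-1}^{-1} \cdots t^{-\theta_1} h_0^{-1},
\]
which represents $1$ in $G_{\ast}$. If $m = 0$, this sequence is reduced of positive $t$-length, contradicting Britton's Lemma, so $m \geq 1$. Because the two original sequences for $g$ and $h$ are reduced, no pinch can sit entirely inside either half; thus the only possible pinch is the junction subsequence $t^{\epsilon_n},\, g_n h_m^{-1},\, t^{-\theta_m}$. The allowed pinch shapes ($t^{-1},\,a,\,t$ with $a \in A$, or $t,\,b,\,t^{-1}$ with $b \in B$) force $\epsilon_n = \theta_m$, with $g_n h_m^{-1} \in A$ in the first case and in $B$ in the second.

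Next, eliminate the pinch by replacing $t^{\epsilon_n}(g_n h_m^{-1}) t^{-\theta_m}$ by $\phi^{\pm 1}(g_n h_m^{-1}) \in G$ and absorbing this element into $g_{n-1}$. The identity $g = h$ then becomes
\[
g_0 t^{\epsilon_1} \cdots t^{\epsilon_{n-1}}\bigl(g_{n-1}\,\phi^{\pm 1}(g_n h_m^{-1})\bigr) \;=\; h_0 t^{\theta_1} \cdots t^{\theta_{m-1}} h_{m-1}
\]
in $G_{\ast}$. Both sides remain reduced, because the only $G$-entries that have been altered are the terminal ones, which are not sandwiched between two $t$-letters and hence cannot create a new pinch. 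The induction hypothesis applied to these shorter reduced words yields $n-1 = m-1$ together with $\epsilon_i = \theta_i$ for $1 \leq i \leq n-1$, and combined with $\epsilon_n = \theta_m$ this completes the proof.

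The only subtle point is verifying that, after collapsing the junction pinch, the shortened sequences are still reduced; this is handled cleanly because the modified $G$-entries sit at the right-hand end of their respective sequences, so no new pinch pattern $t^{\pm 1},\,\cdot\,,t^{\mp 1}$ can be manufactured around them. Everything else is a mechanical application of Britton's Lemma.
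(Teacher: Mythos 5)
Your proof is correct. Note that the paper does not prove this lemma at all --- it is quoted verbatim from Bardakov \cite[Lemma 3]{vb} --- so there is no in-paper argument to compare against; what you have written is the standard induction via Britton's Lemma (localize the unique possible pinch at the junction of $g$ and $h^{-1}$, read off $\epsilon_n=\theta_m$ from the two admissible pinch shapes, collapse, and recurse), and it is exactly the classical argument one finds in Lyndon--Schupp. The only step worth making explicit is that the inverse of a reduced sequence is again reduced (so that no pinch can occur inside the $h^{-1}$ half); this is a one-line check and does not affect correctness.
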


\begin{definition} 
The {\textit{signature}} of $g\in G_{\ast}$ is the sequence $sqn(g)=(\epsilon_{1},\epsilon_{2}, \ldots ,\epsilon_{n})$, $\epsilon_{i} \in \{1,-1\}$ for $g = g_{0}t^{\epsilon_{1}}g_{1}t^{\epsilon_{2}} \ldots g_{n-1}t^{\epsilon_{n}}g_{n}$.
\end{definition}

By \lemref{lem1}, the signature of any $g \in G_{\ast}$ is unique, irrespective of the choice of the reduced word.

\vspace{3mm}

Let $\sigma$=($\epsilon_{1},\epsilon_{2}, \ldots ,\epsilon_{n}$) be a signature. Then the length of the signature, $|\sigma|=n$. And the inverse signature, $\sigma^{-1}$=($-\epsilon_{n},-\epsilon_{n-1}, \ldots ,-\epsilon_{1}$). So, $sqn(g^{-1}) =(sqn (g))^{-1}$.

\vspace{3mm}

Product of two signatures $\sigma$ and $\tau$, $\sigma\tau$, is obtained by writing $\tau$ after $\sigma$.

\vspace{3mm}

Suppose $\sigma= \sigma_{1}\rho$ and $\tau=\rho^{-1}\tau_{1}$ with $|\rho|=r$, then we can define an $r$-product,
$$\sigma[r]\tau = \sigma_{1}\tau_{1}.$$

The following lemma is immediate from the above notions. 
\begin{lem}\cite[Lemma 4]{vb}
For any $g,h \in G_{\ast}$, there exists an integer r $\geq$ 0 such that $sqn(gh)= sqn(g)[r]sqn(h)$, with $sqn(g)= \sigma_{1}\rho$ and $sqn(h)=\rho^{-1}\tau_{1}$ and $|\rho|=r$.
\end{lem}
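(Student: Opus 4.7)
The plan is to obtain a reduced expression for $gh$ from the concatenation of the reduced forms of $g$ and $h$ by iteratively applying Britton's Lemma at the junction, and then to invoke \lemref{lem1} to identify the signature uniquely. Write $g = g_0 t^{\epsilon_1} g_1 \cdots t^{\epsilon_n} g_n$ and $h = h_0 t^{\theta_1} h_1 \cdots t^{\theta_m} h_m$ in their reduced forms, so that
$$gh = g_0 t^{\epsilon_1} \cdots g_{n-1}\, t^{\epsilon_n}(g_n h_0)\, t^{\theta_1} h_1 \cdots t^{\theta_m}h_m.$$
Since $g$ and $h$ are each already reduced internally, the only subword at which a Britton cancellation can occur is the middle block $t^{\epsilon_n}(g_n h_0)\, t^{\theta_1}$.

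I would proceed by induction on $\min(n,m)$. If $\epsilon_n \neq -\theta_1$, or if $\epsilon_n = -1 = -\theta_1$ with $g_n h_0 \notin A$, or $\epsilon_n = 1 = -\theta_1$ with $g_n h_0 \notin B$, then the displayed expression is already reduced; taking $r = 0$ and applying \lemref{lem1} gives $sqn(gh) = (\epsilon_1, \ldots, \epsilon_n, \theta_1, \ldots, \theta_m) = \sigma[0]\tau$. Otherwise the Britton relation replaces $t^{\epsilon_n}(g_n h_0)\, t^{\theta_1}$ by a single element $x \in G$ (equal to $\phi(g_n h_0)$ or $\phi^{-1}(g_n h_0)$), producing
$$gh = g_0 t^{\epsilon_1} \cdots t^{\epsilon_{n-1}}(g_{n-1}\, x\, h_1)\, t^{\theta_2} \cdots t^{\theta_m}h_m,$$
which is the concatenation of reduced forms for elements $g'$ and $h'$ whose signatures are $(\epsilon_1,\ldots,\epsilon_{n-1})$ and $(\theta_2,\ldots,\theta_m)$, respectively. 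The inductive hypothesis applied to $g',h'$ furnishes some $r' \geq 0$ with $sqn(gh) = sqn(g')[r']sqn(h')$; setting $r = r'+1$ together with $\rho = (\epsilon_{n-r+1},\ldots,\epsilon_n)$ and $\rho^{-1} = (\theta_1,\ldots,\theta_r)$ realizes the claimed decomposition $sqn(gh) = \sigma[r]\tau$.

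The one subtle point to verify is that any fresh Britton-reducible pattern created by a cancellation is again confined to the junction, so that the reductions form a single sequential cascade rather than propagating outward. This is immediate: each Britton step merges three consecutive factors of the current word into a single $G$-factor at the junction, and leaves the remaining parts of the word entirely untouched, hence still reduced. No deeper obstacle arises beyond this bookkeeping.
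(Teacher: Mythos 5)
Your proof is correct and is essentially the standard argument: the paper gives no proof of this lemma (it is quoted from Bardakov with the remark that it is immediate from the preceding notions), and your inductive cascade of Britton reductions at the junction, with the key observation that each reduction forces $\theta_{i+1}=-\epsilon_{n-i}$ so that the cancelled prefix of $sqn(h)$ is exactly $\rho^{-1}$, supplies precisely the details being elided. The concluding appeal to the uniqueness of signatures of reduced words to identify $sqn(gh)$ with $\sigma_{1}\tau_{1}$ closes the argument correctly.
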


A reduced expression is called {\textit{positive (negative)}} if all exponents $\epsilon_{i}$ are positive (resp. negative). Further, if it is either positive or negative then the reduced expression is called {\textit{homogeneous}}.

\subsection{Proof of \thmref{hnn}}  \;\

\medskip Let $\sigma$=($\epsilon_{1},\epsilon_{2}, \ldots ,\epsilon_{n}$) be the signature of an $g \in G_{\ast}$.  We define, 

$p_{k}(g)$ = number of $+1, +1,  \ldots ,+1$  sections of length $k$, 

$m_{k}(g)$ = number of $ -1,-1, \ldots ,-1$  sections of length $k$,

$d_{k}(g)$= $p_{k}(g)-m_{k}(g)$, 

$r_{k}(g)$= remainder of $d_k(g)$ divided by 2, and, 
$$\Delta(g) = \sum_{k=1}^{\infty} r_{k}(g).$$

\medskip Clearly, $p_{k}(g^{-1})= m_{k}(g)$ and so, $d_{k}(g^{-1}) +d_{k}(g)=0$ for all $g\in G_{\ast}$.

\begin{lem}
For any elements $g, h \in G_{\ast}$,  $\Delta(gh) \leq \Delta(g)+ \Delta(h)+6$, i.e. $\Delta$ is a quasi-homomorphism. 
\end{lem}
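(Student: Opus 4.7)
The plan is to control how the function $k \mapsto d_k(\cdot)$ changes under multiplication in $G_\ast$, and then to pass to its mod-$2$ reduction. By the preceding lemma, write $\sigma = sqn(g) = \sigma_1\rho$ and $\tau = sqn(h) = \rho^{-1}\tau_1$ with $|\rho| = r$, so that $sqn(gh) = \sigma_1\tau_1$. First I would compare the maximal homogeneous runs of $\sigma$ with those of $\sigma_1$ and $\rho$ taken separately. These agree except possibly for one run that straddles the boundary between $\sigma_1$ and $\rho$; if such a run has sign $\epsilon$, total length $L$ in $\sigma$, and splits into pieces of lengths $L'$ and $L-L'$, then
\[ d_k(\sigma) - d_k(\sigma_1) - d_k(\rho) \;=\; \epsilon\bigl([k=L] - [k=L'] - [k=L-L']\bigr), \]
which is supported on at most three values of $k$. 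In the notation fixed in the introduction this reads $d_\cdot(\sigma) =_3 d_\cdot(\sigma_1) + d_\cdot(\rho)$. The identical analysis at the $\rho^{-1}/\tau_1$ boundary gives $d_\cdot(\tau) =_3 d_\cdot(\tau_1) + d_\cdot(\rho^{-1})$, and because reversing and negating $\rho$ interchanges positive and negative runs of each length one has $d_k(\rho) + d_k(\rho^{-1}) = 0$ for every $k$. A third analogous relation, $d_\cdot(\sigma_1\tau_1) =_3 d_\cdot(\sigma_1) + d_\cdot(\tau_1)$, records the possible merger between the last run of $\sigma_1$ and the first run of $\tau_1$ when the two adjoining signs coincide. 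Adding the three relations yields $d_\cdot(gh) =_N d_\cdot(g) + d_\cdot(h)$, where $N$ is the total number of bad indices.

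The crucial and only nontrivial step is to argue $N \leq 6$ rather than the naive $9$. For this I would perform a short case analysis on three signs $\epsilon_1,\epsilon_2,\epsilon_3 \in \{\pm 1\}$, namely the last sign of $\sigma_1$, the first sign of $\rho$, and the first sign of $\tau_1$. The last sign of $\rho^{-1}$ is then $-\epsilon_2$, so a $\sigma$-spanning run occurs iff $\epsilon_1 = \epsilon_2$, a $\tau$-spanning run iff $\epsilon_3 = -\epsilon_2$, and a junction merger iff $\epsilon_1 = \epsilon_3$. Checking the four possible sign patterns shows that the junction merger occurs precisely when exactly one of the two spanning runs exists; consequently at most two of the three error sources can be simultaneously active, and $N \leq 3+3 = 6$.

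To finish, I would translate this back to $\Delta$. For $k$ outside the bad set of size $\leq 6$, the congruence $d_k(gh) \equiv d_k(g) + d_k(h) \pmod{2}$ together with $r_k \in \{0,1\}$ forces $r_k(gh) \leq r_k(g) + r_k(h)$; at each of the at most $6$ bad indices the excess $r_k(gh) - r_k(g) - r_k(h)$ is trivially bounded by $1$. Summing over all $k \geq 1$ yields $\Delta(gh) \leq \Delta(g) + \Delta(h) + 6$. The sole real obstacle is the sign case analysis of the middle paragraph, which is what keeps the constant at $6$ instead of $9$; everything else is straightforward bookkeeping on runs inside $\pm 1$-sequences.
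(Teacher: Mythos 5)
Your argument is correct. Note, though, that the paper itself gives no proof of this lemma: it simply defers to \cite[Lemma 9]{vb}, so there is no internal argument to compare against, and what you have written is essentially a self-contained reconstruction of the standard proof behind that citation. The bookkeeping is right: each of the three concatenation boundaries ($\sigma_1|\rho$, $\rho^{-1}|\tau_1$, $\sigma_1|\tau_1$) perturbs $d_\cdot$ on at most three indices via a run of length $L$ splitting into pieces of lengths $L'$ and $L-L'$, and the cancellation $d_k(\rho)+d_k(\rho^{-1})=0$ removes the $\rho$-contribution. Your sign analysis is the one genuinely non-routine point and it checks out: with $\epsilon_1,\epsilon_2,\epsilon_3$ as you define them, the three error sources are governed by $\epsilon_1=\epsilon_2$, $\epsilon_3=-\epsilon_2$, and $\epsilon_1=\epsilon_3$ respectively, and running through the four sign patterns shows the number of active sources is always $0$ or $2$, never $3$, giving $N\leq 6$. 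The degenerate cases ($\rho$, $\sigma_1$, or $\tau_1$ empty, or a split with $L'=L-L'$) only shrink the bad set, and the final passage from $d_k$ to $r_k$ uses only parity, so the bound $\Delta(gh)\leq\Delta(g)+\Delta(h)+6$ follows as you say.
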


\begin{proof}
The proof follows from \cite[Lemma 9]{vb}.
\end{proof}

\begin{definition}
 Let  $g= g_{0}t^{\epsilon_{1}}g_{1}t^{\epsilon_{2}} \ldots g_{n-1}t^{\epsilon_{n-1}}g_{n}$ be a reduced element in $G_{\ast}$. Put \[\bar{g}= g_{n}t^{\epsilon_{n-1}} g_{n-1}t^{\epsilon_{n-2}} \ldots g_{1}t^{\epsilon_{1}} g_{0}.\]  We say $g$ is a group-palindrome if $\bar{g}= g$ and $\bar{g}$ depends on the reduced form.
\end{definition}

\begin{lem}
 A group-palindrome $g \in G_{\ast}$ has the form 
\[g= 
\begin{cases}
g_{0}t^{\epsilon_{1}}g_{1} \ldots g_{k-1}t^{\epsilon_k}g_k't^{\epsilon_k}{g_{k-1}} \ldots {g_{1}}t^{\epsilon_{1}}{g_{0}}, & \text{if }\   |sqn(g)|=2k, \\
g_{0}t^{\epsilon_{1}}g_{1} \ldots t^{\epsilon_k}g_kt^{\epsilon_{k+1}}{g_k'}t^{\epsilon_k} \ldots {g_{1}}t^{\epsilon_{1}}{g_0}, & \text{if }\    |sqn(g)|=2k+1, \\
\end{cases}
\] where $g_k'= xg_k$ where $x \in A \cup B$.
\end{lem}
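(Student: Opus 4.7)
The proof proceeds by induction on $n := |\mathrm{sqn}(g)|$, preceded by the observation that the signature itself must be a palindrome. As a word, $\bar g$ has signature $(\epsilon_n,\ldots,\epsilon_1)$ of length $n$; by \lemref{lem1} the reduced expression of the element $\bar g = g$ has signature length exactly $n$, and since reducing $\bar g$ can only shorten its signature length, $\bar g$ must already be reduced. Comparing with the signature of $g$ then forces $\epsilon_i = \epsilon_{n+1-i}$ for all $i$.

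The case $n=0$ is vacuous. For $n=1$, I would rearrange $g_0 t^{\epsilon_1} g_1 = g_1 t^{\epsilon_1} g_0$ into $t^{-\epsilon_1}(g_0^{-1} g_1) t^{\epsilon_1} = g_1^{-1} g_0 \in G$. Britton's Lemma forces $g_0^{-1} g_1 \in A$ when $\epsilon_1 = 1$ and $\in B$ when $\epsilon_1 = -1$, so $g_1 = x g_0$ with $x = \phi^{\epsilon_1}(g_0^{-1} g_1) \in A \cup B$, which is the odd-length formula with $k=0$.

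For the inductive step with $n \geq 2$, the same Britton analysis applied to the outer layer of $g = \bar g$ forces both $g_0^{-1} g_n$ and $g_n g_0^{-1}$ to lie in the appropriate associated subgroups (determined by $\epsilon_1 = \epsilon_n$). Writing $g_n = g_0 c$ with $c \in A \cup B$, and using the HNN identity $t^{\epsilon_n}(g_n g_0^{-1}) t^{-\epsilon_n} = \phi^{-\epsilon_n}(g_n g_0^{-1})$, I rewrite the tail $t^{\epsilon_n} g_0 c = \phi^{-\epsilon_n}(g_n g_0^{-1})\cdot t^{\epsilon_n} g_0$ and absorb the resulting shift into $g_{n-1}$ to obtain $\hat g_{n-1}$. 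The new reduced expression
$$ g = g_0 t^{\epsilon_1} g_1 t^{\epsilon_2} \cdots t^{\epsilon_{n-1}} \hat g_{n-1} t^{\epsilon_n} g_0 $$
now has matching outer elements. Setting $h := g_1 t^{\epsilon_2} \cdots t^{\epsilon_{n-1}} \hat g_{n-1}$, the formal reverse of this new expression is $g_0 t^{\epsilon_n} \bar h t^{\epsilon_1} g_0$, and cancellation in $g = \bar g$ using $\epsilon_1 = \epsilon_n$ yields $h = \bar h$ in $G_{\ast}$. Thus $h$ is a group-palindrome of signature length $n-2$, to which the induction hypothesis applies; reassembling with the outer $g_0 t^{\epsilon_1}$ and $t^{\epsilon_n} g_0$ gives the claimed palindromic form of $g$, with the central factor $x \in A \cup B$ inherited verbatim from the middle of $h$.

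The main obstacle is implementing the outer-layer rewriting cleanly. Two points need care: first, Britton's Lemma must be applied in dual form to extract that both $g_0^{-1} g_n$ and $g_n g_0^{-1}$ lie in the correct (and in general distinct) associated subgroups; second, after forming $\hat g_{n-1}$ one must verify that no new forbidden subsequence $t^{-1} \hat g_{n-1} t$ with $\hat g_{n-1} \in A$, or $t \hat g_{n-1} t^{-1}$ with $\hat g_{n-1} \in B$, is created at position $n-1$, so that the rewritten word is still a reduced expression of $g$. Both are sign-dependent case analyses but routine. A pleasant feature is that the central factor does not accumulate across the recursion, since each outer-stripping modifies only the next-to-outermost $G$-element and leaves the center untouched; the final $x$ is exactly the one produced at the innermost base case, and so lies in $A \cup B$ as claimed.
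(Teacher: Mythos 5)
Your overall strategy is the same as the paper's: apply Britton's Lemma to the relation $g\bar g^{-1}=1$ and peel the expression from the outside in. The paper, however, carries out \emph{all} of the Britton reductions in one pass, obtaining the complete system of relations $g_{n-i}=x_{i}g_{i}x_{i-1}^{-1}$ (together with $\epsilon_{n+1-i}=\epsilon_i$) and only then substituting back, whereas you stop after the first reduction and try to recurse. That is where the gap is. Having written $g_n=x_0g_0$ and pushed $x_0$ across $t^{\epsilon_n}$ to form $\hat g_{n-1}=g_{n-1}y_0$, you claim that ``cancellation in $g=\bar g$'' gives $h=\bar h$ for $h=g_1t^{\epsilon_2}\cdots t^{\epsilon_{n-1}}\hat g_{n-1}$. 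But the hypothesis $g=\bar g$ concerns the reverse of the \emph{original} expression, which begins $x_0g_0t^{\epsilon_n}g_{n-1}\cdots$, while your cancellation uses the reverse of the \emph{rewritten} expression, which begins $g_0t^{\epsilon_n}g_{n-1}y_0\cdots$. These are in general different elements of $G_{\ast}$: their equality amounts to the identity $g_{n-1}^{-1}t^{-\epsilon_n}g_0^{-1}x_0g_0t^{\epsilon_n}g_{n-1}=y_0$, which is not available. Put differently, ``$\bar g=g$'' is a property of a chosen reduced expression, and it does not automatically transfer to the new expression you build, hence not to $h$.

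What actually makes $h$ a group-palindrome is the \emph{second} Britton pinch in $g\bar g^{-1}=1$, namely $g_{n-1}y_0g_1^{-1}\in A\cup B$, which yields $\hat g_{n-1}=y_1g_1$ and puts $h$ in the required form at its two ends. But extracting that relation means continuing the reduction of the full length-$2n$ word, and once you are doing that you may as well derive all $n$ relations at once --- which is exactly the paper's non-inductive argument. So either strengthen your induction hypothesis to cover ``twisted'' palindromes satisfying $\bar g=c_1gc_2$ with $c_1,c_2$ in the associated subgroups, or drop the induction and substitute the full list of Britton relations as the paper does. (Your preliminary observation that the signature must be a palindrome, via \lemref{lem1}, is correct and packages neatly what the paper re-derives step by step; the $n=1$ base case is also fine apart from an inverted conjugation formula, since $t^{-\epsilon_1}(g_1^{-1}g_0)t^{\epsilon_1}=g_0g_1^{-1}$ is what the rearrangement actually gives.)
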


\begin{proof} Let $g \in G_{\ast}$ is a group-palindrome. 

CASE 1: $|sqn(g)|=2k+1$.

Let $g = g_{0}t^{\epsilon_{1}}g_{1}t^{\epsilon_{2}} \ldots g_{2k}t^{\epsilon_{2k+1}}g_{2k+1}$.
We know, $g = \bar{g}$.
\[\Rightarrow g{\bar{g}}^{-1} = 1\]
\[\Rightarrow g_{0}t^{\epsilon_{1}}g_{1} \ldots t^{\epsilon_{2k+1}}g_{2k+1}g_{0}^{-1}t^{-\epsilon_{1}}g_{1}^{-1} \ldots t^{-\epsilon_{2k+1}}g_{2k+1}^{-1} = 1\]

The left side is reducible. So we have,
$g_{2k+1}g_{0}^{-1} = x_0$, where $x_0 \in A$ (or $x_0 \in B$) such that $t^{\epsilon_{2k+1}}x_0t^{-\epsilon_{1}} = y_0$, with 
$y_0 \in B$ (or $y_0 \in A$) and $\epsilon_{2k+1} = \epsilon_{1}= -1$ (or $1$).

\[\Rightarrow g_{0}t^{\epsilon_{1}}g_{1} \ldots t^{\epsilon_{2k}}g_{2k}y_0g_{1}^{-1}t^{-\epsilon_{2}} \ldots g_{2k+1}^{-1} = 1\]

Since $y_0 \in B$ (or $y_0 \in A$),  $g_{2k}{y_0}{g_1}^{-1} = y_1$, $y_1 \in B$ (or $y_1 \in A$) such that $ t^{\epsilon_{2k}}y_1t^{-\epsilon_{2}} = x_1$, where $x_1 \in A$ (or $x_1 \in B$) and $\epsilon_{2k} = \epsilon_{2} = 1$ (or $-1$). 

\[\Rightarrow g_{0}t^{\epsilon_{1}}g_{1} \ldots t^{\epsilon_{2k-1}}g_{2k-1}x_1g_{2}^{-1}t^{-\epsilon_{2}} \ldots g_{2k+1}^{-1} = 1\]

Since $x_1 \in A$ (or $x_1 \in B$),  $g_{2k-1}x_1{g_2}^{-1} = x_2$, $x_2 \in A$ (or $x_2 \in B$) such that $ t^{\epsilon_{2k-1}}x_2t^{-\epsilon_{3}} = y_2$, where $y_2 \in B$ (or $y_2 \in A$) and $\epsilon_{2k-1} = \epsilon_{2} = -1$ (or $1$).

\medskip In general, we get $g_{2k-i}x_{i}g_{i+1}^{-1} = x_{i+1}, $ 
$x_{i}, x_{i+1} \in A$ (or $B$)
 such that
  $t^{\epsilon_{2k-i}}x_{i+1}t^{-\epsilon_{i+2}} = y_{i+1}$, where $y_{i+1} \in B$ (or $A$) 
 and $\epsilon_{2k-i} = \epsilon_{i+2}$, where $0 \leq i \leq k-1$.

\medskip In the expression 
$g = g_{0}t^{\epsilon_{1}}g_{1} t^{\epsilon_{2}}\ldots g_{k}t^{\epsilon_{k+1}}g_{k+1}t^{\epsilon_{k+2}}g_{k+2} \ldots t^{\epsilon_{2k+1}}g_{2k+1}$,

we put $g_{2k+1} = x_0g_0$, and for $0 \leq i \leq k-1$, $g_{2k-i} = x_{i+1}g_{i+1}x_{i}^{-1}$ and $\epsilon_{2k-i} = \epsilon_{i+2}$.
\[\Rightarrow g = g_{0} \ldots g_{k}t^{\epsilon_{k+1}} x_{k}g_kx_{k-1}^{-1}t^{\epsilon_{k}} y_{k-1}g_{k-1}y_{k-2}^{-1}t^{\epsilon_{k-1}}  \ldots y_{1}g_{1}y_{0}^{-1}t^{\epsilon_1}{x_0}{g_0}\]

We know $t^{\epsilon_{i+2}}x_{i+1} = y_{i+1}t^{\epsilon_{i+2}}$ (or $t^{\epsilon_{i+2}}y_{i+1} = x_{i+1}t^{\epsilon_{i+2}}$) for $-1 \leq i \leq k-2$;

\[\Rightarrow g = g_{0} \ldots g_{k}t^{\epsilon_{k+1}} x_{k}g_kx_{k-1}^{-1}x_{k-1}t^{\epsilon_{k}} g_{k-1}y_{k-2}^{-1}y_{k-2}t^{\epsilon_{k-1}}  \ldots x_{1}t^{\epsilon_2}g_{1}y_{0}^{-1}{y_0}t^{\epsilon_1}{g_0}\]
\[\Rightarrow g = g_{0} \ldots g_{k}t^{\epsilon_{k+1}} x_{k}g_kt^{\epsilon_{k}} g_{k-1}t^{\epsilon_{k-1}}  \ldots t^{\epsilon_2}g_{1}t^{\epsilon_1}{g_0}\]

Therefore, \[g = g_{0} \ldots g_{k}t^{\epsilon_{k+1}} {g'_k}t^{\epsilon_{k}}g_{k-1}t^{\epsilon_{k-1}}g_{k-2} \ldots g_{1}t^{\epsilon_1}{g_0};\] where ${g'_k} = x_{k}g_k$.

\vspace{5mm}

CASE 2: $|sqn(g)|=2k$.

Let $g = g_{0}t^{\epsilon_{1}}g_{1}t^{\epsilon_{2}} \ldots g_{2k-1}t^{\epsilon_{2k}}g_{2k}$. 
We know, $g = \bar{g}$. This implies, \[g_{0}t^{\epsilon_{1}}g_{1} \ldots t^{\epsilon_{2k}}g_{2k}g_{0}^{-1}t^{-\epsilon_{1}}g_{1}^{-1} \ldots t^{-\epsilon_{2k}}g_{2k}^{-1} = 1\]

The left side is reducible. So we have, $g_{2k}g_{0}^{-1} = x_0$, where $x_0 \in A$ (or $x_0 \in B$) such that $t^{\epsilon_{2k}}x_0t^{-\epsilon_{1}} = y_0$, with 
$y_0 \in B$ (or $y_0 \in A$) and $\epsilon_{2k} = \epsilon_{1} = -1$ (or $1$).
 
\[\Rightarrow g_{0}t^{\epsilon_{1}}g_{1} \ldots t^{\epsilon_{2k-1}}g_{2k-1}y_0g_{1}^{-1}t^{-\epsilon_{2}} \ldots t^{-\epsilon_{2k}}g_{2k}^{-1} = 1.\]

Since $y_0 \in B$ (or $y_0 \in A$), $g_{2k-1}{y_0}{g_1}^{-1} = y_1$, $y_1 \in B$ (or $y_1 \in A$) such that $ t^{\epsilon_{2k-1}}y_1t^{-\epsilon_{2}} = x_1$, where $x_1 \in A$ (or $x_1 \in B$) and $\epsilon_{2k-1} = \epsilon_{2}$.

\[\Rightarrow g_{0}t^{\epsilon_{1}}g_{1} \ldots t^{\epsilon_{2k-2}}g_{2k-2}x_1g_{2}^{-1}t^{-\epsilon_{3}} \ldots t^{-\epsilon_{2k}}g_{2k}^{-1} = 1.\]
 
Similarly, since $x_1 \in A$ (or $x_1 \in B$), $g_{2k-2}{x_1}{g_2}^{-1} = x_2$, $x_2 \in A$ (or $x_2 \in B$) such that $ t^{\epsilon_{2k-2}}x_2t^{-\epsilon_{3}} = y_2$, where $y_2 \in B$ (or $y_2 \in A$) and $\epsilon_{2k-2} = \epsilon_{3}$.

\[\Rightarrow g_{0}t^{\epsilon_{1}}g_{1} \ldots t^{\epsilon_{2k-3}}g_{2k-3}y_2g_{3}^{-1}t^{-\epsilon_{4}} \ldots t^{-\epsilon_{2k}}g_{2k}^{-1} = 1.\]

\medskip In general, we get $g_{2k-i}{x_{i-1}}g_{i}^{-1} = x_{i}$, $x_{i-1}, x_{i} \in A$ (or $B$) such that $ t^{\epsilon_{2k-i}}x_{i}t^{-\epsilon_{i+1}} = y_{i}$, where $y_{i} \in B$ (or $A$) and $\epsilon_{2k-i} = \epsilon_{i+1}$, where $1 \leq i \leq k$.

\medskip In $g = g_{0}t^{\epsilon_{1}}g_{1} \ldots t^{\epsilon_{2k}}g_{2k}$, 

we put $g_{2k} = x_0g_0$ and for $1 \leq i \leq k$, 
$g_{2k-i} = x_ig_ix_{i-1}^{-1}$ and $\epsilon_{2k-i} = \epsilon_{i+1}$.

\[\Rightarrow g = g_{0}t^{\epsilon_{1}}g_{1} \ldots t^{\epsilon_{k}}x_kg_kx_{k-1}^{-1}t^{\epsilon_{k}}y_{k-1}g_{k-1}y_{k-2}^{-1}t^{\epsilon_{k-1}} \ldots t^{\epsilon_{3}}x_{2}g_{2}x_{1}^{-1}t^{\epsilon_{2}}y_{1}g_{2}y_{0}^{-1}t^{\epsilon_{1}}x_0g_0 \]

Put $t^{\epsilon_{i+1}}x_i = y_it^{\epsilon_{i+1}}$ for $-1 \leq i \leq k-1$.

\[\Rightarrow g = g_{0}t^{\epsilon_{1}}g_{1} \ldots t^{\epsilon_{k}}x_kg_kx_{k-1}^{-1}x_{k-1}t^{\epsilon_{k}}g_{k-1}y_{k-2}^{-1}y_{k-2}t^{\epsilon_{k-1}} \ldots y_{2}t^{\epsilon_{3}}g_{2}x_{1}^{-1}x_{1}t^{\epsilon_{2}}g_{2}y_{0}^{-1}{y_0}t^{\epsilon_{1}}g_0 \]

\[\Rightarrow g = g_{0}t^{\epsilon_{1}}g_{1} \ldots t^{\epsilon_{k}}x_kg_kt^{\epsilon_{k}}g_{k-1}t^{\epsilon_{k-1}} \ldots t^{\epsilon_{3}}g_{2}t^{\epsilon_{2}}g_{2}t^{\epsilon_{1}}g_0 \]

Therefore, \[g = g_{0} \ldots {g'_k}t^{\epsilon_{k-1}}g_{k-1} \ldots  g_1t^{\epsilon_1}g_0,\] where ${g'_k} = x_{k}g_{k}$.
\end{proof}

\begin{lem}
Let $g\in G_{\ast}$ be a product of $k$ group-palindromes, say $g= p_1p_2 \ldots p_k$. Then, $\Delta(g) \leq 7k-6$.\end{lem}

\begin{proof}
 Let $p$ be a  group-palindrome in $G_{\ast}$ of non-zero length.

Then $p$ can be represented as $p=uv\bar u$, where $v$ is the maximal homogeneous palindromic sub-word in $p$ and $\bar u$ is $u$ written in reverse.

For example, if $p = g_{0}t^{\epsilon_{1}}g_{1} \ldots t^{-1}g_{i} t g_{i+1} t \ldots
 t  \bar g_{i+1} t \bar g_{i} t^{-1} \bar g_{i-1} \ldots \bar g_{1} t^{\epsilon_{1}} \bar g_{0}$, then 
\[u=g_{0}t^{\epsilon_{1}}g_{1} \ldots t^{-1}, \]
\[v=g_{i}tg_{i+1}t \ldots t\bar g_{i+1}t\bar g_{i}, \]
\[\bar u=t^{-1}\bar g_{i-1} \ldots  \bar g_{1}t^{\epsilon_{1}} \bar g_{0}. \]

Then for every $k$, $d_k(u)=d_k(\bar u)$. As $v$ is homogeneous, if $k'$ is the length of $sqn(v)$, then 
$$p_{k'}(p)= 2p_{k'}(u)+ p_{k'}(v), \hbox{  or } m_{k'}(p)= 2m_{k'}(u)+ m_{k'}(v).$$
For all other $k$,  $p_{k}(p)= 2p_{k}(u)$ and $m_{k}(p)= 2m_{k}(u)$.

Therefore,  $$ r_{k'}(p)= 1, 
 \hbox{ and } r_{k}(p)= 0 \hbox{ for all other  } k.$$

Thus, \[ \Delta(p) =1.\] If $p\in G$, then $\Delta(p)=0$. So, $\Delta(p) \leq 1$.

\medskip Then, if $g\in G_{\ast}$ is a product of $k$ group-palindromes, say $g= p_1p_2 \ldots p_k$, then 
\begin{equation}\label{1}
\Delta(g)= \Delta(p_1p_2 \ldots p_k) \leq \Delta(p_1)+\Delta(p_2)+ \cdots +\Delta(p_k)+6(k-1) \leq 7k-6.
\end{equation}
This completes the proof. 
\end{proof}
\subsection{Proof of \thmref{hnn}}
Now we prove that $\Delta$ is not bounded from above. For that purpose, we produce the following sequence of reduced words $\{a_i\}$, for which $\Delta(a_i)$ is increasing.
\[ \hbox{Let } a_1 = g_0tg_1t^{-1}g_2tg_3.\]

Then $d_1(a_1)= 1$, so $\Delta(a_1)=1$.
\[\hbox{For } a_2 = g_0tg_1t^{-1}g_2tg_3t^{-1}g_4t^{-1}g_5tg_6tg_7t^{-1}g_8t^{-1}g_9.\]

$d_1(a_2)= 1, ~d_2(a_2)=-1$,  so, $\Delta(a_2)=2$.
\[a_3= g_0tg_1t^{-1}g_2tg_3t^{-1}g_4t^{-1}g_5tg_6tg_7t^{-1}
g_8t^{-1}g_9tg_{10}tg_{11}tg_{12}t^{-1}g_{13}
t^{-1}g_{14}t^{-1}g_{15}tg_{16}t
g_{17}tg_{18}.\] 

Then, 
$d_1(a_3)=1, d_2(a_3)=-1, d_3(a_3)=1$, so,  $\Delta(a_3)=3$.

For each $a_i = g_0tg_1t^{-1}g_2t\ldots$, we have $g_j\in G$ and since $a_i$ is reduced, for subwords of the form $t^{\epsilon}g_{i}t^{-\epsilon}$, $g_i \notin A$ if $\epsilon = -1$ and $g_i \notin B$ if $\epsilon = 1$. 

Given $a_i$, we construct $a_{i+1}$ by attaching a segment with signature of length $3(i+1)$. In  general, 
\[a_n = g_0tg_1t^{-1}g_2tg_3...g_{N-2n}t^{\mp 1}...g_{N-n-1}t^{\mp 1}g_{N-n}t^{\pm 1}g_{N-n+1}t^{\pm 1}...g_{N-1}t^{\pm 1}g_{N};
\] where $N = \frac{3n(n+1)}{2}$; and
 \[sqn(a_n) = (1,-1,1,-1,-1,1,1,-1,-1, \ldots ,\underbrace{\mp 1, \ldots ,\mp 1}_\text{$n-1$ times},\underbrace{\pm 1, \ldots ,\pm 1}_\text{$n$ times},\underbrace{\mp 1, \ldots ,\mp 1}_\text{$n$ times},\underbrace{\pm 1, \ldots ,\pm 1}_\text{$n$ times})\]

\[\Delta(a_n)=n.\]
Then, by  \eqref{1}, we get that the palindromic width of $G_{\ast}$ is infinite. This proves \thmref{hnn}. 

\section{Palindromic Width for Amalgamated Free Products}\label{afp}
First recall the notion of free product with amalgamation.  Let $A= \langle a_1, \ldots | R_1, \ldots \rangle $ and $B= \langle b_1, \ldots |S_1, \ldots \rangle $ be groups. Let $C_1 \subset A$ and $C_2 \subset B$ be subgroups such that there exists an isomorphism $\phi: C_1 \to C_2$. Then the {\textit{free product of $A$ and $B$}}, amalgamating the subgroups $C_1$ and $C_2$ by the isomorphism $\phi$ is the group
$$G=\langle A,B\, |\, c=\phi(c), c \in C_1\rangle.$$
We can view $G$ as the quotient of the free product $A \ast B$ by the normal subgroup generated by $\{c \phi(c)^{-1}| c \in C_1\}$. The subgroups $A$ and $B$ are called factors of $G$, and since $C_1$ and $C_2$ are identified in $G$, we will denote them both by $C$.

\medskip We shall divide the proof of \thmref{free} into two cases. 

\subsection{Case 1} \label{ca1} For a non-trivial $a \in A\cup B$ such that $CaC \neq Ca^{-1}C$. We shall prove the following:

\begin{lem}\label{fl2} 
Let $G=A\ast_{C} B$ be the free product of two groups $A$ and $B$ with amalgamated subgroup $C$. Let $|A:C| \geq 3$, $|B:C| \geq 2$ and there exists an element $a \in A\cup B$ for which $CaC \neq Ca^{-1}C$.  Then $pw(G, \{A \cup B\})$ is infinite.
\end{lem}

\medskip To prove this, we shall use the quasi-homomorphism constructed in \cite{dob1, dob2}. We recall the construction here. 

\subsubsection{Quasi-homomorphisms} 
\begin{definition}
A sequence $x_{1}, \ldots ,x_{n}$, $n \geq 0$, is said to be{\textit{ reduced}} if 
\begin{enumerate}
\item Each $x_i$ is in one of the factors.
\item Successive $x_i$, $x_{i+1}$ come from different factors.
\item If $n>  1$, no $x_i$ is in $C$.
\item If $n=1$, $x_1 \neq 1$.
\end{enumerate}
\end{definition}

By the normal form of elements in free products with amalgamation, see for eg.  \cite{ls}, if $x_{1}, \ldots,x_{n}$  is a reduced sequence, $n \geq 1$, then the product $x_{1} \ldots x_{n} \neq 1$ is in $G$ and it is called a reduced word. Such a representation of a group element is not unique but the following lemma holds:

\begin{prop}
Let $g= x_{1} \ldots x_{n}$ and $h= y_{1} \ldots y_{m}$ be reduced words such that $g= h$ in $G$. Then $m= n$.
\end{prop}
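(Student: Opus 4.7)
The proposition is the length-invariance half of the classical normal form theorem for amalgamated free products (cf.\ Lyndon--Schupp \cite{ls}). My plan is to prove it by the van der Waerden trick: produce a canonical representative for every $g \in G$ whose syllable length is manifestly an invariant, and show that any reduced word rewrites into this canonical representative without changing its length.

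First, I would fix left transversals $T_A \subset A$ and $T_B \subset B$ of the subgroup $C$, each containing $1$, so that every element of $A$ admits a unique decomposition $a = c \cdot t$ with $c \in C$ and $t \in T_A$, and similarly for $B$. Given a reduced word $g = x_1 \cdots x_n$ with $n \geq 2$, I pull the $C$-parts to the left: write $x_n = c_n t_n$ with $t_n \neq 1$ (possible since $x_n \notin C$), then absorb $c_n$ into $x_{n-1}$ and write $x_{n-1} c_n = c_{n-1} t_{n-1}$, again with $t_{n-1} \neq 1$ because $x_{n-1} c_n$ lies in the same $C$-coset as $x_{n-1}$. Iterating, one obtains $g = c \cdot t_1 t_2 \cdots t_n$ with $c \in C$ and the $t_i$ nontrivial coset representatives alternating between $T_A \setminus \{1\}$ and $T_B \setminus \{1\}$. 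Crucially, this rewriting preserves the syllable length $n$.

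Next I would prove uniqueness of such normal forms. Let $\mathcal{N}$ be the set of all tuples $(c; t_1, \dots, t_k)$ of this shape (including the empty sequence $k = 0$). Define left actions of $A$ and $B$ on $\mathcal{N}$ by prepending a syllable and then normalizing: consecutive same-factor syllables collapse via the group law in that factor, possibly absorbing into $c$ when the product lies in $C$. Because an element $c_0 \in C$ acts on $(c; t_1, \dots, t_k)$ simply by $c \mapsto c_0 c$, independently of whether $c_0$ is read through $A$ or through $B$, the amalgamation relations are respected, so the two actions assemble into a well-defined action of $G$ on $\mathcal{N}$. The orbit map $g \mapsto g \cdot (1;\emptyset)$ is then injective, so each $g \in G$ determines a unique normal form, and its length $k$ is a genuine invariant of $g$.

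Finally, applying the rewriting procedure of the second paragraph to both $x_1 \cdots x_n$ and $y_1 \cdots y_m$ transforms each into the unique normal form of $g = h$ without altering syllable length; hence $n = k = m$. The principal technical point is verifying that the proposed $G$-action on $\mathcal{N}$ is well-defined and consistent with the amalgamation relation $c = \phi(c)$, which is where one must carefully track how a coset representative changes when multiplied on the left by the $C$-part descending from the adjacent syllable.
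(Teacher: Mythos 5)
Your argument is correct, but it takes a genuinely different route from the paper. The paper leans on the normal form theorem only through the statement it has already cited from Lyndon--Schupp (a reduced word of length $\geq 1$ is nontrivial in $G$): it forms $1 = x_1\cdots x_n\,y_m^{-1}\cdots y_1^{-1}$ and observes that the only way this product can collapse to the identity is through forced successive cancellations $x_n y_m^{-1}\in C$, $x_{n-1}(x_n y_m^{-1})y_{m-1}^{-1}\in C$, and so on, which pairs off syllables and yields $m=n$ in a few lines. You instead re-prove the full normal form theorem from scratch by the van der Waerden trick: coset-representative canonical forms $c\,t_1\cdots t_k$, a $G$-action on the set of such forms obtained by gluing actions of $A$ and $B$ that agree on $C$, and the observation that the rewriting of a reduced word into canonical form preserves syllable length. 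Your approach is self-contained and proves strictly more (uniqueness of the normal form, and the nontriviality of reduced words as a byproduct), at the cost of the well-definedness verification for the action, which you correctly identify as the main technical point but do not carry out; the paper's approach is shorter because it treats the nontriviality statement as already available. One small point of care: your decomposition $a = c\cdot t$ is with respect to the cosets $Ct$, so calling $T_A$ a \emph{left} transversal is at odds with the formula; what actually matters, and what your argument correctly uses, is that $x_{n-1}c_n \in x_{n-1}C$ stays outside $C$, so the representative $t_{n-1}$ is nontrivial and no syllable is lost in the rewriting.
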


\begin{proof}
Since $g= h$, we have,
\[1= x_{1} \ldots x_{n}y_{m}^{-1} \ldots y_{2}^{-1}y_{1}^{-1}.\]
Since $g$ and $h$ are reduced, we require $x_{n}y_{m}^{-1}$ to belong to $C$. To reduce it further we need $x_{n-1}x_{n}y_{m}^{-1}y_{m-1}^{-1}$ to be in $C$ and so on. Hence, $m= n$.
\end{proof}

\begin{definition}
	Let $g= x_1 \ldots x_n$ be a reduced word of $g \in G$. The elements $x_k$ are said to be {\textit{syllables}} of $g$. 	
	Then the \textit{length} of $g$ is the number of syllables of $g$ and it is denoted by $l(g)$. Here, for $g= x_1 \ldots x_n$, $l(g) = n$.
\end{definition}	

%

\begin{definition}
	Let $a \in A$ such that $CaC \neq Ca^{-1}C$. Let $g \in G$, and $g= x_1x_2 \ldots x_n$ be a reduced word representing it. Then the {\textit{special form}} of $g$ associated to this reduced word is obtained by replacing $x_i$ by $ua^{\epsilon}u'$, whenever $x_i = ua^{\epsilon}u'$ for some $u,u' \in C$ and $\epsilon \in \{+1,-1\}$, in the following way: 
	\begin{enumerate}
		\item When $i=1$, $x_1 = ua^{\epsilon}u'$, we write $g = ua^{\epsilon}x_2' \ldots x_n$, where $x_{2}'= u'x_2$.
		\item When $2 \leq i \leq n-1$, $x_i = ua^{\epsilon}u'$, we write $g = x_1x_2 \ldots x_{i-1}'a^{\epsilon}x_{i+1}' \ldots x_n$, where $x_{i-1}'= x_{i-1}u$ and $x_{i+1}' = u'x_{i+1}$.
		\item When $i=n$ and $x_n = ua^{\epsilon}u'$, where $\epsilon \in \{+1, -1\}$ and $u,u' \in C$, we replace $x_n$ by  $g= x_1x_2 \ldots x_{n-1}'a^{\epsilon}u'$, where $x_{n-1}'= x_{n-1}u$.
	\end{enumerate}
\end{definition}

An \emph{$a$-segment} of length $2k-1$  is a segment of the reduced word of the following form
\[ax_1 \ldots x_{2k-1}a,\]
where $x_j \neq a$ for $j= 1, \ldots, 2k-1$ such that the length of $x_1 \ldots x_{2k-1}$ is $2k-1$.

Similarly, an \emph{$a^{-1}$-segment} of length $2k-1$  is a segment of the reduced word of the following form
\[a^{-1}x_1 \ldots x_{2k-1}a^{-1},\]
where $x_j \neq a^{-1}$ for $j= 1, \ldots, 2k-1$ such that the length of $x_1 \ldots x_{2k-1}$ is $2k-1$.

\vspace{2mm}
For $g \in G$ expressed in special form, we define 

$p_{k}(g)$ = number of $a$-segments of length $2k-1$, 

$m_{k}(g)$ = number of $a^{-1}$-segments of length $2k-1$,

$d_{k}(g)$= $p_{k}(g)-m_{k}(g)$,

$r_{k}(g)$= remainder of $d_k(g)$ divided by 2,  and  

\begin{equation} \label{eq1}
\Delta(g) = \sum_{k=1}^{\infty} r_{k}(g)
\end{equation}

Clearly, $p_{k}(g^{-1})= m_{k}(g)$ and so, $d_{k}(g^{-1}) +d_{k}(g)=0$ for all $g\in$ G.

\begin{lem}
	$\Delta$ is well-defined on special forms.
\end{lem}
\begin{proof}
	
	Let $g \in G$ and $x_1x_2\ldots x_n$ and $y_1y_2\ldots y_n$ be two reduced forms of $g$. 
	
	Now, $x_1x_2\ldots x_n = y_1y_2\ldots y_n$ implies $x_1\ldots x_ny_{n}^{-1}\ldots y_{1}^{-1} = 1$. Then $x_ny_{n}^{-1} = c_n \in C$. Further $x_{n-1}c_{n}y_{n-1}^{-1} = c_{n-1} \in C$ and so on. In general, for $1 \leq i \leq n$, $x_{i}c_{i+1}y_{i}^{-1} = c_{i} \in C$.
	
	So, if $x_i = ua^{\epsilon}u'$ for some $u, u' \in C$ and $\epsilon \in \{+1,-1\}$, $ua^{\epsilon}u'c_{i+1}y_{i}^{-1} = c_{i}$. This gives $va^{\epsilon}v' = y_{i}$, where $v= c_{i}^{-1}u \in C$ and $v' = u'c_{i+1} \in C$. 
	
	Thus, for any $k \in \mathbb{N}$, number of $a^{\epsilon}$ segments of length $2k-1$, for $\epsilon \in \{+1,-1\}$ is independent of the special form of $g \in G$. Thus, $\Delta$ is well-defined on special forms. 	
	\end{proof}

\begin{lem}
For any elements $g,h \in G$, $\Delta(gh) \leq \Delta(g)+ \Delta(h)+9$, i.e. $\Delta$ is a quasi-homomorphism. 
\end{lem}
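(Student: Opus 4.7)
The plan is to mirror the argument of Dobrynina \cite{dob1, dob2}, in which this same function $\Delta$ was used as a quasi-homomorphism to study verbal widths. I would write $g$ and $h$ in their special forms, concatenate, reduce to the special form of $gh$, and then track how the segment-count functions $p_k$ and $m_k$ change. The guiding principle, made precise via the notation $=_m$ introduced at the end of the Introduction, is that $p_k(gh)$, $m_k(gh)$, and $d_k(gh)$ each agree with the corresponding sums from $g$ and $h$ for all but a bounded number of indices $k$.

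First I would decompose $gh$ as $u \cdot \pi \cdot w$, where $u$ is a left-prefix of the special form of $g$, $w$ is a right-suffix of the special form of $h$, and $\pi$ is the reduced special form of the portion of $g$ and $h$ that actually participates in the boundary cancellation. The first observation is that every $a$-segment and $a^{-1}$-segment lying strictly inside $u$ or strictly inside $w$ passes unchanged to a segment of $gh$ of the same type and length. For the segments that get destroyed inside the cancellation zone, the crucial structural fact is that such a cancellation is of the form $\alpha \cdot \alpha^{-1}$; since inversion interchanges $a$-segments with $a^{-1}$-segments of the same length, the $a$-segments of $g$ that disappear are matched, for each length $2k-1$, by equally many $a^{-1}$-segments of $h$ that disappear, so their signed contributions to $d_k(g)+d_k(h)$ cancel out. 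Only a bounded number of segments near the new interface can be genuinely created, split, merged, or have their length shifted.

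Putting these facts together I would obtain constants $c_1, c_2$ with $p_k(gh) =_{c_1} p_k(g) + p_k(h)$ and $m_k(gh) =_{c_1} m_k(g) + m_k(h)$, hence $d_k(gh) =_{2c_1} d_k(g) + d_k(h)$ and therefore $r_k(gh) =_{2c_1} (r_k(g) + r_k(h)) \bmod 2$. Since each $r_k$ takes values in $\{0,1\}$, each of the boundedly many exceptional indices contributes at most $2$ to $|\Delta(gh) - \Delta(g) - \Delta(h)|$, and a careful accounting of the boundary possibilities yields the explicit constant $9$ asserted in the statement.

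The principal obstacle is making the matching between destroyed $a$-segments in the tail of $g$ and destroyed $a^{-1}$-segments in the head of $h$ entirely precise, including the bookkeeping for $C$-factors that get absorbed into neighboring syllables when moving to special form. This is where the Case~1 hypothesis $CaC \neq Ca^{-1}C$ plays an essential role: it guarantees that $a$-segments and $a^{-1}$-segments cannot be interchanged by any $C$-adjustment, so the type of each segment remains unambiguous throughout the cancellation process, and the parity calculation for $r_k$ is well defined.
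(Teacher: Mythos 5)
Your outline follows exactly the strategy the paper relies on: the paper's entire proof of this lemma is the single line ``See the proof of \cite[Lemma 2]{dob1}'', and your decomposition of $gh$ into a prefix, a cancellation zone, and a suffix --- with the destroyed $a$-segments of $g$ matched against destroyed $a^{-1}$-segments of $h$ so that their signed contributions cancel in $d_k(g)+d_k(h)$, and only boundedly many indices $k$ exceptional in the sense of the $=_m$ notation --- is precisely Dobrynina's argument. The sketch is correct as far as it goes; the only step neither you nor the paper carries out explicitly is the final count of exceptional indices (each of which raises $\Delta(gh)-\Delta(g)-\Delta(h)$ by at most $1$, since $r_k(gh)\leq 1$) that pins down the constant $9$.
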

\begin{proof}
See the proof of \cite[Lemma 2]{dob1}.
\end{proof}

\subsubsection{Normal form of palindromes}
\begin{definition}
Let $g= x_1 \ldots x_n$ be a reduced word of $g \in G$. Let $\bar{g}$ be the word obtained by writing $g$ in the reverse order, i.e. $\bar{g}= {x}_n \ldots {x}_1$. This is a non-trivial element of $G$. We say $g$ is a {group-palindrome} if  $\bar{g}= g$. 
\end{definition}

\begin{lem}\label{lem2}
A group-palindrome  $g \in G$ has the form 
\[g= 
x_1x_2 \ldots x_kx_{k+1}'{x_k}{x_{k-1}} \ldots {x_1}\] where $x_{k+1}'= x_{k+1}c$ with $c\in C$.
\end{lem}

\begin{proof} Let $g$ is a group-palindrome in $G$. 

CASE 1: $l(g)=2k+1$.

Let \[g= x_1x_2 \ldots {x_k}{x_{k+1}} \ldots x_{2k}x_{2k+1}.\]
We know $g = {\bar{g}}$
\[x_1x_2 \ldots {x_k}{x_{k+1}} \ldots x_{2k}x_{2k+1} = x_{2k+1}x_{2k} \ldots x_2x_1\]
\[\Rightarrow x_1x_2 \ldots x_{2k}x_{2k+1}{x_1}^{-1}{x_2}^{-1} \ldots {x_{2k}}^{-1}{x_{2k+1}}^{-1} = 1 \]

Since the expression on the left side is reducible,  $x_{2k+1}{x_1}^{-1} = c_1$; for $c_1 \in C$. This implies, 
$x_{2k+1} = {c_1}x_1$. Thus, $$x_1x_2 \ldots x_{2k}{c_1}{x_2}^{-1} \ldots {x_{2k-1}}^{-1}{x_{2k}}^{-1} = 1.$$

Further $x_{2k}{c_1}{x_2}^{-1} = c_2$; for $c_2 \in C$ $\Rightarrow x_{2k} = c_2{x_2}c_{1}^{-1}$.
\[\Rightarrow x_1x_2 \ldots x_{2k-1}{c_2}{x_3}^{-1} \ldots {x_{2k-1}}^{-1}{x_{2k}}^{-1} = 1 \]

In general we get $x_{2k-i} = c_{i+2}{x_{i+2}}c_{i+1}^{-1}$ for $0 \leq i \leq k-2$.

Then, \[g= x_1x_2 \ldots {x_k}{x_{k+1}} \ldots x_{2k-1}x_{2k}.\]
\[\Rightarrow g = x_1x_2 \ldots {x_k}x_{k+1}c_{k}{x_k}c_{k-1}^{-1}c_{k-1}{x_{k-1}}c_{k-2}^{-1} \ldots c_{3}{x_{3}}c_{2}^{-1}c_{2}{x_{2}}c_{1}^{-1}c_{1}{x_{1}}\]
\[\Rightarrow g = x_1x_2 \ldots {x_k}x_{k+1}c_{k}x_{k} \ldots {x_{3}}{x_{2}}{x_{1}}.\]

Therefore, $ g = x_1x_2 \ldots x'_{k+1}x_{k} \ldots {x_{3}}{x_{2}}{x_{1}}$, where $x'_{k+1} = x_{k+1}c_{k}$.

\medskip 
CASE 2: $l(g)=2k$. 

Let \[g= x_1x_2 \ldots {x_k}{x_{k+1}} \ldots x_{2k-1}x_{2k}.\]
We know $g = {\bar{g}}$
\[x_1x_2 \ldots {x_k}{x_{k+1}} \ldots x_{2k-1}x_{2k} = x_{2k}x_{2k-1} \ldots x_2x_1\]
\[\Rightarrow x_1x_2 \ldots x_{2k-1}x_{2k}{x_1}^{-1}{x_2}^{-1} \ldots {x_{2k-1}}^{-1}{x_{2k}}^{-1} = 1 \]

We know the expression on the left side is reducible. 

So, $x_{2k}{x_1}^{-1} = c_1$; for $c_1 \in C$ 
$\Rightarrow x_{2k} = {c_1}{x_1}$.

\[\Rightarrow x_1x_2 \ldots x_{2k-1}{c_1}{x_2}^{-1} \ldots {x_{2k-1}}^{-1}{x_{2k}}^{-1} = 1 \]

Further $x_{2k-1}{c_1}{x_2}^{-1} = c_2$; for $c_2 \in C$ 

$\Rightarrow x_{2k-1} = c_2{x_2}c_{1}^{-1}$.

\[\Rightarrow x_1x_2 \ldots x_{2k-2}{c_2}{x_3}^{-1} \ldots {x_{2k-1}}^{-1}{x_{2k}}^{-1} = 1 \]

In general we get $x_{2k-i} = c_{i+1}{x_{i+1}}c_{i}^{-1}$ for $1 \leq i \leq k-1$. In particular, for $i=k$, $x_{k} = c_{k+1}{x_{k+1}}c_{k}^{-1}$. This is a contradiction as the consecutive syllables lie in different factors in a reduced word.

Thus, for $g \in G$ with $l(g) = 2k$, $g$ cannot be a group-palindrome.
\end{proof}

\begin{lem}\label{lempal}
Let $g\in G$ be a product of $k$ group-palindromes, say $g= p_1p_2 \ldots p_k$. Then, $\Delta(g) \leq 12k- 9$.
\end{lem}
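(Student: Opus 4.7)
The plan is to reduce the bound to the single-palindrome case. I claim that every group-palindrome $p \in G$ satisfies $\Delta(p) \leq 3$. Granting this and iterating the quasi-homomorphism inequality $\Delta(xy) \leq \Delta(x) + \Delta(y) + 9$ a total of $k-1$ times yields
\[
\Delta(p_1 p_2 \cdots p_k) \leq \sum_{i=1}^{k} \Delta(p_i) + 9(k-1) \leq 3k + 9(k-1) = 12k - 9,
\]
which is exactly the assertion of the lemma.

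To prove $\Delta(p) \leq 3$, write $p$ in the symmetric canonical form supplied by the preceding lemma: $p = L \cdot v \cdot \bar L$, where $L = x_1 x_2 \cdots x_{m-1}$ in the even-length case $l(p) = 2m$ (or $L = x_1 \cdots x_m$ in the odd-length case), $\bar L$ is the reverse of $L$, and $v$ is the short central block ($x_m c x_m$ or $x_{m+1} c$). Next, put $p$ into special form, choosing the decomposition of each syllable of $\bar L$ to be the mirror image of the decomposition chosen for its partner in $L$; this makes the special form of $p$ itself palindromic in the positions of its $a^{\pm 1}$-letters.

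With this symmetric choice, every $a$-segment (and every $a^{-1}$-segment) whose two endpoints lie entirely within $L$ has a mirror copy of the same length with both endpoints in $\bar L$, and these pairs cancel in the parity of $d_j(p)$. The remaining contributions to $p_j(p)$ and $m_j(p)$ come from segments that meet the central block $v$. By palindromic symmetry, the sequence of positions of $a$-letters (respectively, $a^{-1}$-letters) is itself symmetric about the centre of $p$, so the multiset of gap lengths between consecutive $a$'s (respectively consecutive $a^{-1}$'s) is symmetric, and each gap length occurs with even multiplicity save possibly for one central unpaired gap of each type. This accounts for at most two exceptional values of $j$ for which $r_j(p) = 1$; a third exceptional value can appear because the central syllable $v$, once expanded in special form, may contribute an additional $a^{\pm 1}$-letter that fuses asymmetrically with an outer $a^{\pm 1}$-letter, creating one further parity defect.

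The main obstacle is the bookkeeping at the centre. The hypothesis $CaC \neq Ca^{-1}C$ is essential here: it prevents the special form from trading an $a$-letter for an $a^{-1}$-letter by conjugating through $C$, so that $p_j$ and $m_j$ are well defined and genuinely symmetric under palindromic reversal. Once the at-most-three central parity exceptions are verified, $\Delta(p) \leq 3$ holds, and the quasi-morphism iteration above completes the proof of the lemma.
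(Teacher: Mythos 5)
Your proposal follows essentially the same route as the paper: decompose the palindrome as $p = h u \bar h$ with $d_j(h) = d_j(\bar h)$, observe that only the boundedly many segments meeting the central block can disturb the parity of $d_j(p)$ (at most three values of $j$, giving $\Delta(p) \leq 3$), and then iterate the quasi-homomorphism inequality to get $3k + 9(k-1) = 12k-9$. The argument is correct and matches the paper's proof, differing only in the verbosity of the central bookkeeping.
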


\begin{proof}
 Let $p$ be a group-palindrome in $G$ of non-zero length.

Then $p$ can be expressed as $p=hu\bar{h}$, where $\bar{h}$ is $h$ written in reverse and $u$ is of the form $x_i'$ from \lemref{lem2}. Then, for every $k$, $d_k(h)=d_k(\bar{h})$.

If $u= a$, we have
\[p_k(p)=_2 2p_k(h),\]
\[m_k(p)=_1 2m_k(h).\]
Then we get
\[d_k(p)=_3 2d_k(h).\]

If $u= a^{-1}$, as above, we get $d_k(p)=_3 2d_k(h)$.

If $u \neq a, a^{-1}$, we get $d_k(p)=_2 2d_k(h)$.

So, in general we have, 
\[d_k(p)=_3 2d_k(h).\]
Thus, \[r_k(p)=_3 0.\]
and $\Delta(p) \leq 3$.         

Thus, if $g\in G$ is a product of $k$ group-palindromes, say $g= p_1p_2 \ldots p_k$, then 
\begin{equation}\label{3}
\Delta(g)= \Delta(p_1p_2 \ldots p_k) \leq \Delta(p_1)+\Delta(p_2)+ \ldots +\Delta(p_k)+9(k-1) \leq 12k-9.
\end{equation}
This completes the proof.
\end{proof}

\subsubsection{Proof of \lemref{fl2}}
Now we prove that $\Delta$ is not bounded from above. For that purpose, we produce the following sequence $\{g_i\}$ for which $\Delta(g_i)$ is increasing.

\vspace{2mm}

Let $b \in B$ but not in $C$.

\medskip {Let } $g_1 = baba^{-1}ba$. Then, $p_1(g_1)= 0, p_2(g_1)= 1$ and $p_k(g_1)= 0$ for all other $k$; $m_k(g_1)= 0$ for all $k$; $d_2(g_1)= 1$ and $d_k(g_1)= 0$ for all other $k$. So, $\Delta(g_1) =1$.

\medskip Let $g_2= baba^{-1}baba^{-1}ba^{-1}ba$. Then 
$p_1(g_2)= 0, p_2(g_2)= p_3(g_2)= 1$ and $p_k(g_2)= 0$ for all other $k$, and, 
$m_1(g_2)= m_2(g_2)= 1$ and $m_k(g_2) =0$ for all other $k$. So, $\Delta(g_2) =2$.

\medskip Let $ g_3= baba^{-1}baba^{-1}ba^{-1}baba^{-1}ba^{-1}ba^{-1}ba$. Then 
$p_1(g_3)= 0, p_2(g_3)= p_3(g_3)= p_4(g_3)= 1$ and $p_k(g_3)= 0$ for all other $k$; 
$m_1(g_3)= 3, m_2(g_3)= 2$ and $m_k(g_3) =0$ for all other $k$. So, $\Delta(g_3) =4$.

\vspace{2mm}

In general, for 
\[g_n= baba^{-1}ba(ba^{-1})^2 \ldots \ldots ba(ba^{-1})^{n-1}ba(ba^{-1})^nba,\]

$p_1(g_n)= 0, ~ p_k(g_n)= 1$ for $1< k< n+1$;  $m_1(g_n)= \frac{n(n-1)}{2}, ~m_2(g_n)= n-1$ and for $k \neq 1, 2$, $m_k(g_n) =0$.  Thus we have
\[\Delta(g_n)=r_1 + r_2 + (n-1),  \]
where $r_1$ is the remainder of $\frac{n(n-1)}{2}$ divided by $2$ and $r_2$ is that of $n$ divided by $2$. So,
 
\[\Delta(g_n) \geq n-1. \]

Then, by  \eqref{3}, we get that the palindromic width of $G$ is infinite. This proves \lemref{fl2}.

\subsection{Case 2} For a non-trivial $a \in A\cup B$ such that  $CaC= Ca^{-1}C$.

\begin{lem}\label{fl3} 
Let $G=A\ast_{C} B$ be the free product of two groups $A$ and $B$ with amalgamated subgroup $C$ . Let $|A:C| \geq 3$, $|B:C| \geq 2$ and there exists an element $a \in A\cup B$ for which $CaC = Ca^{-1}C$. Then $pw(G, \{A \cup B\})$ is infinite.
\end{lem}

This lemma follows using similar methods as in \lemref{fl2}. Since the arguments will only be a slight modification of the ones used in \lemref{fl2}, we shall skip the details here. Following \cite{dob1}, we can define a quasi-homomorphism $\Delta$, same as \eqref{eq1}, and it follows that: 
\begin{lem}\label{pal1}
For any elements $g,h \in G$, $\Delta(gh) \leq \Delta(g)+ \Delta(h)+9$.
\end{lem}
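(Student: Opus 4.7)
The plan is to follow the template of the proof of Lemma 3.4 (and of \cite[Lemma 2]{dob1}), modifying only the junction analysis to account for the new phenomenon introduced by the assumption $CaC = Ca^{-1}C$. First I would write $g$ and $h$ in their special forms $g = x_1 \ldots x_n$ and $h = y_1 \ldots y_m$, form the concatenation, and reduce back to special form. The crucial observation is that reducing the product only alters a bounded neighbourhood of the splice-point $x_n y_1$; outside this neighbourhood every $a$-segment and every $a^{-1}$-segment of $g$ and of $h$ persists unaltered in $gh$. Consequently, for each length $k$, the only segments of either type that can appear, disappear, or change length when passing from $g, h$ to $gh$ are those lying within a bounded distance of the junction.

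Next I would perform the junction count. Under the hypothesis $CaC = Ca^{-1}C$, the residual $a^{\pm 1}$-syllables at the tail of $g$ and the head of $h$ may recombine, through the amalgamated subgroup $C$, into syllables of the opposite sign; this is the essential difference from Case 1, where such fusion is impossible. In the worst case an $a$-segment of $g$ and an $a^{-1}$-segment of $h$ can partially cancel and fuse into a single new segment spanning the junction. However, since this fusion occurs at only one interface, the number of segments destroyed on either side and the number of new segments created is bounded by an absolute constant $C_0$ independent of $g$ and $h$; furthermore the set of indices $k$ for which $p_k$ or $m_k$ is affected has cardinality at most $C_0$.

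Translating into the $=_m$ notation of the Introduction, one obtains $p_k(gh) =_{C_0} p_k(g) + p_k(h)$ and $m_k(gh) =_{C_0} m_k(g) + m_k(h)$, hence $d_k(gh) =_{2C_0} d_k(g) + d_k(h)$ and therefore $r_k(gh) =_{2C_0} r_k(g) + r_k(h) \pmod{2}$. Summing the contributions over all $k$ and using $r_k \in \{0,1\}$ gives a bound of the shape $\Delta(gh) \leq \Delta(g) + \Delta(h) + C$. A careful accounting of the (at most three) destroyed $a$-segments on the $g$-side, the (at most three) on the $h$-side, and the (at most three) new segments produced by fusion at the junction — exactly as in \cite[Lemma 2]{dob1} — yields $C = 9$.

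The main obstacle is the junction analysis itself: one must verify that even though $a$ and $a^{-1}$ become interchangeable modulo $C$ across the splice-point, the number of segments affected stays bounded by an absolute constant rather than growing with $|g|$ or $|h|$. Once this uniform boundedness is established, the sum-over-$k$ step is a routine application of the triangle inequality for $r_k$ mod $2$, and the constant $9$ falls out of the arithmetic.
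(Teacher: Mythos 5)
Note first that the paper does not actually prove this lemma: it simply refers the reader to \cite{dob1, dob2} (just as the HNN analogue refers to \cite[Lemma 9]{vb}), so your proposal is being measured against the argument in those sources rather than against anything written here.

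There is a genuine gap in your junction analysis. You assert that reducing the concatenation of the special forms of $g$ and $h$ ``only alters a bounded neighbourhood of the splice-point,'' and from this you deduce $p_k(gh) =_{C_0} p_k(g)+p_k(h)$ and $m_k(gh) =_{C_0} m_k(g)+m_k(h)$. Both claims are false: in an amalgamated free product the cancellation at the junction can propagate arbitrarily far (for the extreme case take $h = g^{-1}w$, so that every syllable of $g$ disappears in $gh$), and hence the number of $a$-segments and $a^{-1}$-segments destroyed is not bounded by any absolute constant. What saves the lemma --- and what is the actual content of \cite[Lemma 2]{dob1} --- is a pairing argument: write $g = g_1\rho$ and $h = \rho' h_1$ where $\rho\rho'$ collapses into $C$, so that $\rho'$ is, up to elements of $C$, the inverse of $\rho$. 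Then every $a$-segment of $\rho$ is matched with an $a^{-1}$-segment of $\rho'$ and vice versa, giving $d_k(\rho)+d_k(\rho')=0$ for all $k$; consequently $d_k(gh) = d_k(g)+d_k(h)$ up to a bounded number of exceptional $k$ arising only from the segments straddling the two boundaries of the cancellation region and the splice point itself. This is precisely why the quasi-homomorphism is built from the differences $d_k = p_k - m_k$ (and their parities $r_k$) rather than from $p_k$ and $m_k$ separately --- a point your argument never uses, and without which the ``uniform boundedness'' you flag as the main obstacle cannot be established. The final step (summing the $r_k$ and extracting the constant $9$) is fine once the $d_k$ identity is in place, but as written the proposal proves approximate additivity of the wrong quantities.
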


For a proof of the above lemma, see  \cite{dob1, dob2}.

\medskip If $g\in G$ is a product of $k$ group-palindromes, say $g= p_1p_2 \ldots p_k$, then using  \lemref{pal1}, a version of   \lemref{lempal}  holds,  and we have, 
\begin{equation}\label{2}
\Delta(g)= \Delta(p_1p_2 \ldots p_k) \leq \Delta(p_1)+\Delta(p_2)+ \cdots +\Delta(p_k)+9(k-1) \leq 12k-9.
\end{equation}

And finally, for the same sequence used in \secref{ca1}, using \eqref{2}, we get that the palindromic width of $G$ is infinite. 

\subsection{Proof of \thmref{free}}
 The result follows by combining \lemref{fl2} and \lemref{fl3}. 

\subsubsection{The index two case}
So far we have shown that the palindromic width of $G= A{\ast}_C B$, when $|A: C| \geq 3$, $|B: C| \geq 2$, is infinite. Let's now consider the case when $|A: C| \leq 2$, $|B: C| \leq 2$.

\begin{prop}
Let $G= A{\ast}_C B$ be the free product of two groups $A$ and $B$ with amalgamated subgroup $C$ and $|A: C| \leq 2$, $|B: C| \leq 2$. If $S$ and $T$ are the generating sets of $A$ and $B$ respectively. If $pw(C, \{S, T\})$ is finite, then $pw(G, \{A \cup B\})$ is finite. 
\end{prop}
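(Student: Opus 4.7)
The plan is to exploit the strong structural constraints imposed by $|A:C|\leq 2$ and $|B:C|\leq 2$. When either index equals one, say $A=C$, the amalgamated product collapses: $G=B$, and every element of $G$ is a single letter of the generating set and hence already a palindrome. So the only substantive case is $|A:C|=|B:C|=2$, which I will treat as follows.

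Since $C$ has index two in both factors, $C$ is normal in $A$ and in $B$. Because $A\cup B$ generates $G$, the subgroup $C$ is then normal in $G$, and the quotient is
\[G/C\;\cong\;(A/C)\ast(B/C)\;\cong\;\Z_2\ast\Z_2,\]
the infinite dihedral group $D_\infty$. Fix representatives $a\in A\setminus C$ and $b\in B\setminus C$. Normality of $C$ allows any $C$-factor in a reduced expression to be pushed past a letter $a$ or $b$ at the cost of replacing it by its conjugate (which still lies in $C$). Applying this repeatedly to the normal form of an arbitrary $g\in G$ yields a canonical expression
\[g\;=\;c\cdot w,\]
where $c\in C$ and $w$ is a reduced alternating word in the two letters $a,b$; i.e., the image of the class of $g$ in $D_\infty$ under the obvious section.

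It remains to count palindromes. The element $c$ lies in $C\subseteq A$ and hence is a single letter of the generating set $A\cup B$, contributing one palindrome of length one. I claim that $w$, regarded as an element of $D_\infty$ with generating set $\{a,b\}$, is a product of at most two palindromes: if the alternating word $w$ has odd length, the alternation forces letters at symmetric positions to agree, so $w$ is itself a palindrome; if $w=x_1x_2\cdots x_n$ has even length $n$, then peeling off the last letter gives $w=(x_1x_2\cdots x_{n-1})\cdot x_n$, where the first factor is an odd-length alternating word (a palindrome by the previous case) and $x_n$ is a single-letter palindrome. Since any palindrome over $\{a,b\}$ is automatically a palindrome over the larger alphabet $A\cup B$, concatenating gives $g$ as a product of at most three palindromes, proving $pw(G,\{A,B\})\leq 3$.

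The only step requiring any real care is the existence of the canonical form $g=c\cdot w$, which rests on the standard normal form for $A\ast_C B$ combined with the normality of $C$ in $G$; the palindromic bound for $D_\infty$ is then essentially forced by the alternation rule, so no substantial obstacle is anticipated.
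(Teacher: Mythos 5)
Your proof is correct and follows essentially the same route as the paper: both reduce to the index-two case, write $g$ as a $C$-prefix times an alternating word in fixed coset representatives (the paper cites Bogopolski's $C$-normal form where you rederive it from the normality of $C$ in $G$), and then count one palindrome for the prefix plus at most two for the $\Z_2 \ast \Z_2$ tail. The only difference is cosmetic: you spell out the two-palindrome bound for alternating words that the paper leaves implicit in the statement $pw(x_1x_2\cdots x_n)\leq 2$.
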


\begin{proof}
We only need to consider the case of $|A: C|= 2$, $|B: C|= 2$. Then $C$ is a normal subgroup of both $A$ and $B$.

Let $T_{A}$ and $T_{B}$ be the sets of right coset represntatives of $C$ in $A$ and $C$ in $B$. Here, $T_{A} \cong \Z_{2}$ and $T_{B} \cong \Z_{2}$.

Every element $g \in G$ can be expressed uniquely as a $C$-normal form. A $C$-normal form of $g$ is a sequence $(x_0,x_1,...,x_n)$, where $g = x_0x_1...x_n$ with $x_0 \in C$, $x_i \in T_{A}\setminus \{1\} \sqcup T_{B}\setminus \{1\}$ and consecutive $x_i$, $x_{i+1}$ lie in distinct sets. Existence and uniqueness of such a $C$-normal form follows from \cite[Theorem 11.3]{ob}.

So, for $g = x_0x_1...x_n$, where $(x_0,x_1,...,x_n)$ is the $C$-normal form of $g$, clearly, $x_1x_2...x_n \in T_{A}{\ast}T_{B} \cong \Z_2{\ast}\Z_2$.

This implies that $pw(x_1x_2...x_n) \leq 2$. Therefore, 

$pw(g) \leq pw(x_0) + pw(x_1x_2...x_n) \leq 3$.
\end{proof}





\end{document}